\theoremstyle{plain}
\newtheorem{prop}{Proposition}[section]
\newtheorem{thm}[prop]{Theorem}
\newtheorem{cor}[prop]{Corollary}
\theoremstyle{definition}
\newtheorem{remark}[prop]{Remark}
\newcommand{\ints}{\ensuremath{\mathbb{Z}}}
\newcommand{\intmod}[1]{\ensuremath{\mathbb{Z}/{(#1)}}}
\newcommand{\rats}{\ensuremath{\mathbb{Q}}}
\newcommand{\n}{\noindent}
\DeclareMathOperator{\Hom}{Hom}
\newcommand{\smsh}{\ensuremath{{\scriptstyle \wedge}}}             
\newcommand{\wdg}{\ensuremath{{\scriptstyle \vee}}}
\DeclareMathOperator{\im}{im}
\newcommand{\map}[3]{\ensuremath{#1 : #2 \longrightarrow #3}}
\DeclareMathOperator{\ann}{ann}
\newcommand{\thicksub}[1]{\ensuremath{{{\bf thick}\langle#1\rangle}}}
\DeclareMathOperator{\StMod}{StMod}
\newcommand{\loc}[1]{\ensuremath{{{\bf loc}\langle#1\rangle}}}
\newcommand{\dr}{\ensuremath{{\mathcal{D}(R)}}}
\newcommand{\modcat}[1]{\ensuremath{{\bf Mod}(#1)}}
\DeclareMathOperator{\chr}{char}
\DeclareMathOperator{\End1}{End}
\newcommand{\Endo}[1]{\ensuremath{\End1(#1)}}
\newcommand{\idp}{\ensuremath{\mathfrak{p}}}
\begin{document}  

\title{Triangulations of Projective Modules}
\date{\today}

\author{Mark Hovey}
\address{Department of Mathematics \\ Wesleyan University
\\ Middletown, CT 06459}
\email{hovey@member.ams.org}

\author{Keir Lockridge}
\address{Department of Mathematics \\ Wesleyan University
\\ Middletown, CT 06459}
\email{keir@alumni.rice.edu}

\keywords{triangulated category, quasi-Frobenius ring, IF-ring, generating hypothesis, ring spectra}
\subjclass[2000]{Primary: 18E30; Secondary: 55P43, 16L60}

\begin{abstract} We show that the category of projective modules over a graded commutative ring admits a triangulation with respect to module suspension if and only if the ring is a finite product of graded fields and exterior algebras on one generator over a graded field (with a unit in the appropriate degree).  We also classify the ungraded commutative rings for which the category of projective modules admits a triangulation with respect to the identity suspension.  Applications to two analogues of the generating hypothesis in algebraic topology are given, and we translate our results into the setting of modules over a symmetric ring spectrum or $S$-algebra.
\end{abstract}

\maketitle

\tableofcontents

\section{Introduction}

In the stable category of spectra, Freyd's generating hypothesis (\cite[\S 9]{freyd}) is the conjecture that any map of finite spectra inducing the trivial map of homotopy groups must itself be trivial.  The global version of this conjecture -- that this is true for all maps of spectra -- is easily seen to be false.  The present work was motivated by the following question:  what must be true about a triangulated category in order for it to support a global version of the generating hypothesis?  We will show that this question is related to the following strictly algebraic one:  for which rings does the associated category of projective modules admit a triangulation?  This question can be addressed without reference to any form of the generating hypothesis; this is done in section 2 for commutative rings.  In section 3, we make explicit the relationship between this algebraic question and two forms of the generating hypothesis; there is slightly more at issue, however, than whether projective modules admit triangulations.  In the final section, we simply translate our work into the setting of ring spectra (symmetric ring spectra or $S$-algebras), where we define semisimple and von Neumann regular ring spectra and discuss their classification.

First, we establish notation.  Let $R$ be a graded ring.  By a module we mean a graded right $R$-module.  Let $\mathcal{P}$ be the category of projective $R$-modules, and let $\mathcal{P}_f$ denote the category of finitely generated projective $R$-modules.  For any $R$-module $M$, write $M[n]$ for the shifted module with $M[n]_i = M_{i-n}$.  For any element $m \in M$, denote by $|m|$ the degree of $m$.  Write $\Hom_k(M,N)$ for degree $k$ module maps from $M$ to $N$; observe that $\Hom_k(M,N) = \Hom_0(M[k],N)$.  If $M$ is an $R$-$R$-bimodule (or, if $R$ is graded commutative), then, for any element $x \in R$ of degree $i$, let $x \cdot M$ denote the right module map from $M[i]$ to $M$ induced by left multiplication by $x$.

We assume that the reader is somewhat familiar with triangulated categories.  In brief, a triangulated category is an additive category $\mathcal{T}$ together with an automorphism $\Sigma$ of $\mathcal{T}$ called {\em suspension} and a collection of diagrams called {\em exact triangles} of the form
\[ \xymatrix{A \ar[r] & B \ar[r] & C \ar[r] & \Sigma A}\]
satisfying several axioms (see \cite[A.1.1]{axiomatic} or \cite{neeman} or \cite{weibel}).  Using the suspension functor, one can view the morphism sets in $\mathcal{T}$ as graded groups:  define $[X, Y]_k = \Hom_{\mathcal{T}}(\Sigma^k X, Y)$ and
\[ [X,Y]_* = \bigoplus_{k \in \ints} [X,Y]_k.\]
It seems natural, therefore, to consider one of two possibilities for $\Sigma$ on $\mathcal{P}$ or $\mathcal{P}_f$.  If $\Sigma(-) = (-)[1]$, then $[M,N]_*$ is the graded group of graded module maps from $M$ to $N$.  Alternatively, we could consider $\Sigma = {\bf 1}$, the identity functor.  When $R$ is concentrated in degree zero, one then obtains  a triangulation of the category of ungraded projective (or finitely generated projective) $R$-modules by identifying this category with the thick subcategory of $\mathcal{P}$ (or $\mathcal{P}_f$) generated by the modules concentrated in degree zero.  Usually, we will assume that the suspension functor is of the form $\Sigma = (-)[n]$ for some $n$.

For convenience, we will use the term $\Delta^n$-{\em ring} to refer to any ring for which $\mathcal{P}$ admits a triangulation with suspension $\Sigma = (-)[n]$ and $\Delta^n_f$-{\em ring} for any ring for which $\mathcal{P}_f$ admits a triangulation with suspension $\Sigma = (-)[n]$.  Our goal is to characterize the graded commutative rings $R$ for which the categories $\mathcal{P}$ and $\mathcal{P}_f$ admit triangulations.  In particular, we prove

\begin{thm}  Let $\mathcal{P}$ be the category of projective modules over a graded commutative ring $R$.  $\mathcal{P}$ admits a triangulation with suspension $\Sigma = (-)[1]$ if and only if $R \cong R_1 \times \cdots \times R_n$, where each factor ring $R_i$ is either a graded field $k$ or an exterior algebra $k[x]/(x^2)$ over a graded field $k$ containing a unit of degree $3|x|+1$.
\label{mainthm1}
\end{thm}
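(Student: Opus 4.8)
The plan is to prove both implications: the ``if'' direction by an explicit construction, and the ``only if'' direction by analysing cofibers and fibers of maps in $\mathcal{P}$.

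\noindent\textbf{The ``if'' direction.} Since $\mathcal{P}(R_1\times\cdots\times R_n)\cong\mathcal{P}(R_1)\times\cdots\times\mathcal{P}(R_n)$ and a finite product of triangulated categories is triangulated, it suffices to triangulate $\mathcal{P}(k)$ for a graded field $k$ and $\mathcal{P}(\Lambda)$ for $\Lambda=k[x]/(x^2)$ with a unit of degree $3|x|+1$. For a graded field every module is free and $\mathcal{P}(k)$ is semisimple, so one uses the trivial triangulation whose exact triangles are the direct sums of rotations of $A\xrightarrow{\ \id\ }A\to 0\to A[1]$ and $0\to B\xrightarrow{\ \id\ }B\to 0$; the axioms are then essentially formal. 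For $\Lambda$ the projectives are exactly the free modules, the indecomposables are the shifts $\Lambda[m]$, and every nonzero non-isomorphism between rank-one free modules is, after isomorphisms on source and target, multiplication by $x$. One declares the exact triangles of $\mathcal{P}(\Lambda)$ to be the arbitrary direct sums of rotations of $\Lambda[a]\xrightarrow{\ \id\ }\Lambda[a]\to 0$, of $0\to\Lambda[b]\xrightarrow{\ \id\ }\Lambda[b]$, and of
\[ \Lambda[a+|x|]\xrightarrow{\ \ x\ \ }\Lambda[a]\longrightarrow\Lambda[a-|x|]\longrightarrow\Lambda[a+|x|+1]. \]
The third term and the connecting map of this ``cofiber of $x$'' triangle exist precisely because $k$ contains a unit of degree $3|x|+1$ (this hypothesis also gives $\Lambda[a-|x|]\cong\Lambda[a+2|x|+1]$, so the construction is well defined up to the expected isomorphism). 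One then checks the triangulated-category axioms for this class of triangles; the octahedral axiom is the main technical task of this direction.

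\noindent\textbf{The ``only if'' direction: reduction to the local case.} Suppose $\mathcal{P}$ is triangulated with $\Sigma=(-)[1]$. The key is that $[R,-]_*$ is naturally the identity functor on graded $R$-modules and carries exact triangles to long exact sequences. Writing an arbitrary graded module $M$ as the cokernel of a map $\phi\colon R^{(T)}\to R^{(S)}$ of free modules, the fiber of $\phi$ lies in $\mathcal{P}$ and the long exact sequence identifies $M$ with a submodule of that fiber; thus every graded module embeds in a projective (hence free) module, so by the graded Faith--Walker theorem $R$ is (graded) quasi-Frobenius. In particular $R$ is graded Artinian, hence --- being commutative --- a finite product $R=R_1\times\cdots\times R_n$ of graded-local Artinian rings, and since the defining idempotents are central and the subcategories they cut out are triangulated subcategories (a direct summand of an exact triangle is exact), each $\mathcal{P}(R_i)$ is triangulated. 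We therefore reduce to the case that $R$ is graded-local quasi-Frobenius, with residue field $k$, maximal ideal $\mathfrak m$, and composition length $\ell$.

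\noindent\textbf{The ``only if'' direction: conclusion.} For homogeneous $r$ of degree $i$ the long exact sequence for the triangle on $r\cdot R\colon R[i]\to R$ puts the cofiber $C_r\in\mathcal{P}$ into a short exact sequence $0\to R/rR\to C_r\to\ann_R(r)[i+1]\to 0$; since $rR\cong R/\ann_R(r)$ we get $\operatorname{length}(C_r)=2\bigl(\ell-\operatorname{length}(rR)\bigr)$, and freeness of $C_r$ forces $\ell\mid 2\operatorname{length}(rR)$. Taking $0\neq r$ in the socle makes $rR$ simple, so $\ell\le 2$. If $\ell=1$ then $R=k$ is a graded field. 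If $\ell=2$ then $\mathfrak m$ is simple and $\mathfrak m^2=0$, so $R$ is a square-zero extension of $k$ by a shift $k[d]$ of $k$; analysing $C_x$ for $x$ spanning $\mathfrak m$ then shows $k$ must contain a unit of degree $3d+1$, and a further argument --- extracting from the triangulation enough of the multiplicative structure to eliminate the square-zero extensions that are not algebras over their residue field --- identifies $R$ as $k[x]/(x^2)$ with $|x|=d$. I expect the two main obstacles to be the octahedral-axiom verification in the ``if'' direction and the completion of the $\ell=2$ case just described.
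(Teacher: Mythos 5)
Your overall architecture---reduce to the graded-local case via quasi-Frobeniusness, then classify local $\Delta^1$-rings, then construct triangulations---matches the paper's. Within this, two things deserve comment.

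Your length argument in the ``only if'' direction is a genuinely nicer route to the key bound than what the paper does. The paper's Proposition~\ref{maxismin} reaches the conclusion (that $\mathfrak m$ is principal and contains no nontrivial proper ideal) via a rather intricate element-level analysis of the cofiber triangle: it first establishes $x^2 = 0$, then laboriously shows $\ann(x) = (x)$ and that the cofiber is free of rank one by tracking generators and relations. Your observation that the long exact sequence gives $\operatorname{length}(C_r) = 2(\ell - \operatorname{length}(rR))$, combined with freeness of $C_r$, immediately forces $\ell \le 2$ once $r$ is taken in the socle. This is cleaner, though note it leans on $R$ already being known to be graded Artinian with a good theory of composition length, whereas the paper's Proposition~\ref{maxismin} runs under the weaker hypothesis of coherence and does not presuppose the quasi-Frobenius reduction. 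Since you establish quasi-Frobeniusness first, your ordering is fine.

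Where you have genuine gaps. First, as you acknowledge, the $\ell = 2$ case is not finished: you must rule out the square-zero extension of $k$ that does \emph{not} contain a field, i.e.\ the unequal-characteristic case $\mathfrak m = (p)$. The paper does this by observing that since $|p| = 0$, freeness of the cofiber forces a unit of degree $3\cdot 0 + 1 = 1$; for an odd-degree unit $s$, graded commutativity gives $2s^2 = 0$ hence $2 = 0$, and then $\mathfrak m = (2) = 0$, a contradiction. This step is short but you should not leave it implicit, since it is precisely what distinguishes the $\Sigma = (-)[1]$ theorem from the $\Sigma = \mathbf 1$ theorem (where $T/(4)$ survives). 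Second, and more seriously, your ``if'' direction proposes a direct declare-and-check construction, while the paper proceeds via a differential graded algebra $A = k\langle a, u\rangle/(a^2,\ au + ua + v)$ with $da = u^2$, whose derived category $D(A)$ is automatically triangulated and is shown to be equivalent to $\mathcal P$ via homology. Your plan flags the octahedral axiom as the main difficulty, but there is an earlier hidden issue you do not mention: to verify TR1 (every morphism extends to an exact triangle) with your proposed class of triangles, you need a structure theorem asserting that \emph{every} morphism between projective $\Lambda$-modules---of arbitrary, possibly infinite, rank---decomposes as a direct sum of identity maps, zero maps, and multiplications by $x$. This is a kind of Smith normal form over $k[x]/(x^2)$, routine in finite rank but not obvious (and requiring at least a Zorn-type argument, if true at all) in the infinite-rank case needed for $\mathcal P$ rather than $\mathcal P_f$. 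The paper's DGA construction sidesteps both this and the octahedral verification entirely; you should either import that argument or seriously address the morphism-decomposition problem before claiming the ``if'' direction.
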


In the ungraded case, we have

\begin{thm}  Let $\mathcal{P}$ be the category of projective modules over a commutative ring $R$.  $\mathcal{P}$ admits a triangulation with suspension $\Sigma = {\bf 1}$ if and only if $R \cong R_1 \times \cdots \times R_n$, where each factor ring $R_i$ is either a field $k$, an exterior algebra $k[x]/(x^2)$ over a field $k$ of characteristic 2, or $T/(4)$, where $T$ is a complete $2$-ring.
\label{mainthm0}
\end{thm}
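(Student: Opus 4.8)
The plan is to follow the same three-step strategy as in the proof of Theorem~\ref{mainthm1}: reduce to an indecomposable ring, extract constraints from the cone of multiplication by a uniformizer, and then build a triangulation by hand in the cases that survive. The new phenomenon — and the source of both the characteristic-$2$ hypothesis and the rings $T/(4)$ — is that with $\Sigma=\mathbf 1$ the exact triangles are much more rigid than with $\Sigma=(-)[1]$, because applying $[X,-]$ to a triangle now yields a genuinely $3$-periodic long exact sequence in the three groups $[X,A],[X,B],[X,C]$.

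For the reduction, $R$ (the free module of rank one) is a compact generator of $\mathcal P$, and the $3$-periodic sequences attached to the cone of an arbitrary map $R^m\to R^n$ show that its cokernel embeds in a finitely generated projective and its kernel is finitely presented; thus $R$ is coherent and is an IF-ring, and a chain-condition argument with these cones forces $R$ to be a finite product of local Artinian rings $R_i$. Since $\mathcal P(R)\simeq\prod_i\mathcal P(R_i)$ and a finite product of additive categories is triangulated precisely when each factor is, it is enough to treat a local Artinian ring $(A,\mathfrak m)$, for which $\mathcal P$ is the category of free $A$-modules. For $0\ne a\in\mathfrak m$ the cone $C_a$ of $a\cdot\colon A\to A$ is free and, because $\Sigma=\mathbf 1$, sits in a short exact sequence $0\to A/aA\to C_a\to\ann(a)\to 0$; hence $\operatorname{length}(C_a)=2\operatorname{length}(\ann a)$ is a multiple of $\operatorname{length}(A)$, necessarily $\operatorname{length}(A)$ itself since $a$ is a nonzero zero divisor, so $\operatorname{length}(\ann a)=\operatorname{length}(A)/2$ for every nonzero $a\in\mathfrak m$. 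Applying this to a socle element gives $\operatorname{length}(A)\le 2$, so $A$ is a field or has $\mathfrak m=(\pi)$ with $\pi^2=0$, residue field $k$, and length $2$; in the second case $A$ is self-injective and, by Cohen structure theory, $A\cong k[\pi]/(\pi^2)$ if $\chr A=\chr k$ and $A\cong T/(p^2)$ for a complete $p$-ring $T$ if $\chr A=p^2\ne p=\chr k$.

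The characteristic-$2$ restriction is then squeezed out of the axioms. The cone triangle of $\pi\cdot$ must be, up to isomorphism of triangles, the ``self-triangle'' $A\xrightarrow{\pi}A\xrightarrow{\pi}A\xrightarrow{\pi}A$: its structure maps lie in $\ann(\pi)=(\pi)$ and, being nonzero, are multiplication by $\pi$ times a unit. Rotating this triangle yields a second triangle $A\xrightarrow{\pi}A\xrightarrow{\pi}A\xrightarrow{-\pi}A$ on the same morphism $\pi\cdot$, and the axiom that two triangles on a fixed morphism are isomorphic by an isomorphism fixing the first two terms supplies an isomorphism whose third component is $\mathbf 1+n\cdot$ with $n\in\ann(\pi)$; chasing the last commuting square and using $\pi n=0$ collapses it to $\pi=-\pi$, i.e.\ $2\pi=0$ in $A$. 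In the equicharacteristic case this is $\chr k=2$, giving an exterior algebra over a field of characteristic $2$; in the mixed case it is $2\in\mathfrak m_T$, i.e.\ $p=2$, giving $A\cong T/(4)$ with $T$ a complete $2$-ring. Conversely, each allowed factor $A$ is a local chain ring whose nonzero non-units are all a square-zero uniformizer $\pi$ with $2\pi=0$ (take $\pi=0$ for a field), so projectives are free and, by the Smith normal form over a local chain ring, every morphism of free $A$-modules is isomorphic to a direct sum of copies of $\mathbf 1_A$, of $\pi\cdot\colon A\to A$, and of $A\to 0$ and $0\to A$. Declaring the exact triangles to be the finite direct sums (up to isomorphism) of the trivial triangle $A'\xrightarrow{\mathbf 1}A'\to 0\to A'$, the self-triangle above, and their rotations, axiom (TR1) becomes the normal-form statement plus ``a sum of triangles is a triangle'', (TR2) holds because the self-triangle is rotation-stable exactly because $-\pi=\pi$, and (TR3)--(TR4) reduce via the normal form to a finite check on the building blocks; the equivalence $\mathcal P(R_1\times\cdots\times R_n)\simeq\prod_i\mathcal P(R_i)$ then carries the product triangulation.

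I expect the real difficulty to be concentrated in two places: the chain-condition step that forces $R$ to be Artinian (shared with the proof of Theorem~\ref{mainthm1}), and — special to $\Sigma=\mathbf 1$ — making the rotation argument watertight and confirming that direct sums of the self-triangle satisfy the octahedral axiom, since this is the one axiom where the identity suspension could misbehave in a way invisible in the $(-)[1]$ case.
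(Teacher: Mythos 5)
Your outline tracks the paper's strategy — reduce to the local Artinian case, squeeze constraints out of the cone of a uniformizer and the rotation axiom, then construct triangulations and reassemble via products — but there are three places where what you wrote diverges from or falls short of the paper's argument.

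The length-counting step is a genuinely different and clean route. Where you use the short exact sequence $0\to A/aA\to C_a\to\ann(a)\to 0$ (valid because $\Sigma=\mathbf 1$ makes the long exact sequence $3$-periodic) together with freeness of $C_a$ and a rank count to conclude $\operatorname{length}(A)\le 2$, the paper's Proposition~\ref{maxismin} instead runs a diagram chase showing the double-annihilator condition, that the cofiber of $x\cdot$ is free of rank $1$, that $x^2=0$, and that $\ann(x)=(x)$ — an argument that works for arbitrary $\Sigma=(-)[n]$, not just $\Sigma=\mathbf 1$. Your version is slicker but is special to $n=0$, which is fine since that is the case of Theorem~\ref{mainthm0}.

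Two points that need tightening. First, your normalization of the cone triangle of $\pi$ to the literal self-triangle $A\xrightarrow{\pi}A\xrightarrow{\pi}A\xrightarrow{\pi}A$ is not justified: the triangle has the form $A\xrightarrow{\pi}A\xrightarrow{s\pi}A\xrightarrow{t\pi}A$ with $s,t$ units, and an isomorphism of triangles fixing the first arrow can only normalize both if $st\equiv 1\pmod{\pi}$, which you have not established. The paper (Proposition~\ref{localcase}) avoids this by keeping the units $v,w$ and deriving $2p=0$ from the fill-in $\kappa$ in the rotated square, using commutativity of $R$; your argument can be repaired the same way (normalize the middle map only, or run the paper's computation directly). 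Second, your reduction ``a chain-condition argument with these cones forces $R$ to be a finite product of local Artinian rings'' is vague: since the theorem concerns $\mathcal{P}$ (all projectives), the paper's Proposition~\ref{timpqf} gives quasi-Frobenius directly (every injective is projective), and then the commutative quasi-Frobenius structure theorem (\cite[15.27]{lam}) yields the product of local Artinian rings. Your detour through coherence and IF-rings is what one does for $\mathcal{P}_f$, and it is not clear how to get Artinian from there without more work.

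The real gap is the converse. You propose to declare exact triangles to be direct sums of trivial triangles and rotations of the ``self-triangle,'' and you correctly flag that the octahedral axiom is where this could go wrong. But that is precisely the hard content — verifying (TR4) for $T/(4)$ is the substance of \cite{muro}, and the paper deliberately does not redo it: Proposition~\ref{t4} cites \cite{muro} for the $T/(4)$ case, and Proposition~\ref{dga} handles $k[x]/(x^2)$ with $\chr k=2$ by exhibiting $\mathcal P$ as the derived category of an explicit differential graded algebra (where the differential has degree $0$), so the axioms come for free from the DGA framework. Your uniform direct-sum construction, applied to the exterior algebra case as well as $T/(4)$, would need a careful verification of (TR4) that you do not supply; as it stands, the sufficiency direction is not proved. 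To close the gap you should either invoke \cite{muro} directly (after checking its hypotheses cover your cases) or switch to the DGA construction for the exterior algebra factor.
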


\n  A complete 2-ring is a complete local discrete valuation ring of characteristic zero whose maximal ideal is generated by 2 (see \cite[p. 223]{matsumura}).  In \cite{muro}, it is shown that the category of finitely generated projective $T/(4)$-modules admits a unique triangulation, and we use their methods to construct triangulations for the rings appearing in the above two theorems.

Let $\mathcal{T}$ be a triangulated category, and let $S \in \mathcal{T}$ be a distinguished object.  Write $\pi_*(-)$ for the functor $[S,-]_*$.  We say that $\mathcal{T}$ {\em satisfies the global generating hypothesis} if $\pi_*$ is a faithful functor from $\mathcal{T}$ to the category of graded right modules over $\pi_* S$. In the following application of Theorem \ref{mainthm1}, we assume that $\mathcal{T}$ is a monogenic stable homotopy category as defined in \cite{axiomatic}.  Certain conclusions can be drawn with weaker hypotheses; this should be clear in the proof.

\begin{cor} Let $\mathcal{T}$ be a monogenic stable homotopy category with unit object $S$.  $\mathcal{T}$ satisfies the global generating hypothesis if and only if \begin{enumerate}
\item $R \cong R_1 \times \cdots \times R_n$, where $R_i$ is either a graded field $k$ or an exterior algebra $k[x]/(x^2)$ over a graded field containing a unit in degree $3|x|+1$ ($R$ is a $\Delta^1$-ring), and
\item for every factor ring of $R$ of the form $k[x]/(x^2)$, $x \cdot \pi_* C \neq 0$, where $C$ is the cofiber of $x\cdot S$. 
\end{enumerate}
\label{gghcor}
\end{cor}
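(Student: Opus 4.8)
The plan is to identify the global generating hypothesis for $\mathcal{T}$ with the faithfulness of $\pi_* = [S,-]_*$ on all of $\mathcal{T}$, to tie this to the triangulability of the category $\mathcal{P}$ of projective $R$-modules (where $R = \pi_* S$) so that Theorem~\ref{mainthm1} applies, and then to extract condition~(2) from the cofiber of $x\cdot S$. Throughout I will use the standard structural features of a monogenic stable homotopy category: $S$ is a compact generator, $\mathcal{T}$ has all small coproducts and is idempotent complete, the smash product is available with unit $S$, any morphism inducing an isomorphism on $\pi_*$ is an equivalence, and an idempotent of the graded ring $R = \pi_* S$ splits $S$ and hence induces a product decomposition of $\mathcal{T}$.

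For the ``only if'' direction, suppose $\pi_*$ is faithful. Let $\mathcal{Q}\subseteq\mathcal{T}$ be the full subcategory of retracts of (possibly infinite) wedges of shifted copies of $S$. Because $S$ is compact, $\pi_*$ identifies the graded morphism groups between wedges of shifted copies of $S$ with the graded $\Hom$ groups between the corresponding free graded $R$-modules; passing to retracts and using idempotent completeness of $\mathcal{T}$ and of $\mathcal{P}$, the functor $\pi_*$ restricts to an equivalence $\mathcal{Q}\longrightarrow\mathcal{P}$ carrying the suspension of $\mathcal{Q}$ to $(-)[1]$ on $\mathcal{P}$. The hypothesis then forces $\mathcal{Q}$ to be closed under cofibers, hence a triangulated subcategory of $\mathcal{T}$: were there a morphism $f$ in $\mathcal{Q}$ whose cofiber had non-projective $\pi_*$, the analysis of this section would produce a nonzero morphism of $\mathcal{T}$ annihilated by $\pi_*$. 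Granting this, $\mathcal{P}\simeq\mathcal{Q}$ admits a triangulation with $\Sigma = (-)[1]$, so Theorem~\ref{mainthm1} gives $R\cong R_1\times\cdots\times R_n$ with each $R_i$ a graded field or an exterior algebra $k[x]/(x^2)$ over a graded field $k$ containing a unit of degree $3|x|+1$; this is~(1). For~(2), note that the cofiber $C$ of $x\cdot S$ lies in $\mathcal{Q}$, since the source and target of $x\cdot S$ have free $\pi_*$; hence $\pi_* C$ is projective over $R$. Fix a factor $R_i\cong k[x]/(x^2)$; the element $x$ is supported in this factor, so the $i$-th component of $\pi_* C$, being projective over $R_i$ and hence free, is by the cofiber triangle $\Sigma^{|x|}S\to S\to C\to\Sigma^{|x|+1}S$ an extension of $\ker(x\cdot R_i)$ by $\coker(x\cdot R_i)$, each isomorphic to $k$ up to shift. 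The only such extension that is free over $k[x]/(x^2)$ is $k[x]/(x^2)$ itself, in which $x$ acts nontrivially; as $x$ acts as zero on the other components of $\pi_* C$, we conclude $x\cdot\pi_* C\neq0$, which is~(2).

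For the ``if'' direction, assume~(1) and~(2). The decomposition $R\cong R_1\times\cdots\times R_n$ splits $S$, so $\mathcal{T}\simeq\mathcal{T}_1\times\cdots\times\mathcal{T}_n$ with unit $S_i$ of $\mathcal{T}_i$ satisfying $[S_i,S_i]_*\cong R_i$; since $[S,-]_* = \bigoplus_i[S_i,-]_*$, it suffices to prove $[S_i,-]_*$ is faithful on each $\mathcal{T}_i$. A morphism $\Sigma^m S_i\to X$ is, tautologically, the element of $[S_i,X]_m$ it represents, so if $X\in\mathcal{T}_i$ has $[S_i,X]_*$ free over $R_i$ then realizing a free basis gives a map $\bigvee_j\Sigma^{n_j}S_i\to X$ that is a $[S_i,-]_*$-isomorphism, hence an equivalence; such $X$ is then a wedge of shifted copies of $S_i$, on which $[S_i,-]_*$ is visibly faithful. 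It remains to show $[S_i,X]_*$ is free for every $X\in\mathcal{T}_i$. If $R_i$ is a graded field this is automatic. If $R_i\cong k[x]/(x^2)$ with $k$ a graded field, a graded $R_i$-module $M$ is free if and only if every $\xi\in M$ with $x\xi = 0$ lies in $xM$ (using $x^2=0$ and that graded subspaces of graded $k$-vector spaces split off). Write $C_i$ for the cofiber of $x\cdot S_i$ in $\mathcal{T}_i$, which is the $i$-th component of the object $C$ in~(2), the other components being wedges of spheres on which $x$ acts trivially; thus~(2) says exactly that $x$ acts nontrivially on $[S_i,C_i]_*$, and since the cofiber triangle presents $[S_i,C_i]_*$ as an extension of two shifted copies of $k$, this forces $[S_i,C_i]_*\cong k[x]/(x^2)$ up to shift, in particular free. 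Now suppose, toward a contradiction, that $[S_i,X]_*$ is not free, witnessed by $\xi\in[S_i,X]_n$ with $x\xi = 0$ but $\xi\notin x\cdot[S_i,X]_*$, and let $\bar\xi\colon\Sigma^n S_i\to X$ be the corresponding map. The composite $\bar\xi\circ(x\cdot\Sigma^n S_i)\colon\Sigma^{n+|x|}S_i\to X$ is the homotopy element $x\xi = 0$ and hence is the zero morphism, so $\bar\xi$ factors through the cofiber of $x\cdot\Sigma^n S_i$, namely $\Sigma^n C_i$; write $\bar\xi = \tilde\xi\circ(\Sigma^n\iota)$ with $\iota\colon S_i\to C_i$. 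Since $[S_i,C_i]_*\cong k[x]/(x^2)$ is free, the cofiber triangle forces $\iota_*$ to have image $x\cdot[S_i,C_i]_*$ (the unique copy of $k$ inside $k[x]/(x^2)$), so $\xi = \tilde\xi_*\bigl((\Sigma^n\iota)_*(1)\bigr)\in\tilde\xi_*\bigl(x\cdot[S_i,\Sigma^n C_i]_*\bigr)\subseteq x\cdot[S_i,X]_*$, contradicting the choice of $\xi$. Hence $[S_i,X]_*$ is free, completing the proof.

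The main obstacle is the step invoked in the ``only if'' direction: that faithfulness of $\pi_*$ forces the subcategory $\mathcal{Q}$ of projective-realizing objects to be closed under cofibers, equivalently that $\pi_*$ of the cofiber of any morphism between wedges of shifted copies of $S$ (in particular of $x\cdot S$) is projective. This is where the strength of the global hypothesis, rather than merely the compactness of $S$, is used: one must show that the appearance of a non-free summand in such a cofiber's $\pi_*$ over some $k[x]/(x^2)$-factor produces, via the triangulated (octahedral) structure together with the cellular/Postnikov resolutions available in a monogenic stable homotopy category, a nonzero morphism killed by $\pi_*$. Once this is in hand, the ``only if'' direction merely reads off~(1) from Theorem~\ref{mainthm1} and~(2) from the cofiber triangle of $x\cdot S$, while the ``if'' direction is the self-contained argument above; the remaining verifications are routine facts about graded modules over $k[x]/(x^2)$ with $k$ a graded field.
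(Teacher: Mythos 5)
Your ``if'' direction is correct and in fact takes a genuinely different (and arguably cleaner) route than the paper. After reducing to the local case $R_i \cong k[x]/(x^2)$ exactly as the paper does, the paper first establishes freeness of $\pi_*X$ for compact $X$ using the structure of maps of free $k[x]/(x^2)$-modules, and then handles general $X$ by writing $\pi_*X$ as a filtered colimit of $\pi_*X_\alpha$ with $X_\alpha$ compact, invoking ``colimit of flat is flat'' and ``flat implies free over $k[x]/(x^2)$.'' You instead verify freeness directly for arbitrary $X$ via the criterion $\ker(x\cdot M) \subseteq xM$: given $\xi$ with $x\xi = 0$, you factor $\bar\xi$ through $\Sigma^n C_i$ and use that $\im(\iota_*) = x\cdot \pi_*C_i$ (the unique proper nonzero submodule of $\pi_*C_i \cong R_i$, once condition (2) forces $\pi_*C_i$ to be free) to land $\xi$ in $x\cdot\pi_*X$. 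This avoids the colimit argument entirely and is self-contained; it is a nice improvement.

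However, your ``only if'' direction contains a genuine gap, which you yourself flag at the end: you need $\mathcal{Q}$, the retracts of wedges of shifts of $S$, to be closed under cofibers, and your proposed way to fill the gap is circular. You suggest showing that ``the appearance of a non-free summand in such a cofiber's $\pi_*$ over some $k[x]/(x^2)$-factor'' produces a map killed by $\pi_*$ --- but you do not yet know $R$ is a product of graded fields and exterior algebras; that is precisely what Theorem~\ref{mainthm1} is being invoked to prove, and Theorem~\ref{mainthm1} can only be applied once $\mathcal{P}$ is known to be triangulated. The correct argument, as in the paper's Proposition~\ref{ghchar}, bypasses $R$'s structure entirely: for any $Y \in \mathcal{T}$, choose generators of $\pi_*Y$ to build a map $f\colon X \to Y$ with $X$ a wedge of shifts of $S$ and $\pi_*f$ surjective; then the next map $g\colon Y \to Z$ in the cofiber triangle has $\pi_*g = 0$, so $g = 0$ by the global generating hypothesis, making $Y$ a retract of $X$. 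This shows $\mathcal{T} = \mathcal{Q}$, so $\mathcal{Q}$ trivially inherits the triangulation, and then Theorem~\ref{mainthm1} yields (1), and (2) follows because $C$ then automatically lies in $\mathcal{Q}$ so $\pi_*C_i$ is free. Your proof of (2) also implicitly depends on $C \in \mathcal{Q}$, so it is blocked by the same gap until this is repaired.
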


The {\em thick subcategory generated by $S$} ($\thicksub{S}$) is the smallest full subcategory of $\mathcal{T}$ that contains $S$ and is closed under suspension, retraction, and exact triangles.  We say that $\mathcal{T}$ {\em satisfies the strong generating hypothesis} if, for any map $\map{f}{X}{Y}$ with $X \in \thicksub{S}$ and $Y \in \mathcal{T}$, $\pi_* f = 0$ implies $f = 0$.  Regarding the strong generating hypothesis, we have the following corollary.  If $\idp$ is a prime ideal, write $R_\idp$ for the localization of $R$ at $\idp$.

\begin{cor} Assume the hypotheses of Corollary \ref{gghcor}.  If $\mathcal{T}$ satisfies the strong generating hypothesis, then, for any prime ideal $\idp$, $R_\idp$ is either a graded field $k$ or an exterior algebra $k[x]/(x^2)$ with a unit in degree $3|x| + 1$.  If $R$ is local or Noetherian, then $\mathcal{T}$ satisfies the strong generating hypothesis if and only if it satisfies the global generating hypothesis.
\label{gghsghcor}
\end{cor}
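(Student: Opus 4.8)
The plan is to combine the strong generating hypothesis with the classification of the graded commutative rings over which $\mathcal{P}_f$ admits a triangulation, and then to extract the two conditions of Corollary~\ref{gghcor}, the second one via a single application of the octahedral axiom. One direction of the biconditional is immediate: since the strong generating hypothesis only constrains morphisms emanating from $\thicksub{S}$, it is a special case of the global generating hypothesis, so GGH always implies SGH.

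For the first assertion I would first show that SGH forces $\mathcal{P}_f$, the category of finitely generated projective $R$-modules, to admit a triangulation with suspension $(-)[1]$. The argument of Section~3 -- that the global generating hypothesis makes $\pi_*$ a full, faithful, exact functor onto $\mathcal{P}$ -- applies with $\thicksub{S}$ in place of $\mathcal{T}$: faithfulness on $\thicksub{S}$ is exactly SGH, the unit identification $[S,S]_* \cong R$ supplies the base case of fullness, and, $\mathcal{T}$ being idempotent complete, fullness lets one split off every finitely generated projective, so the image is all of $\mathcal{P}_f$. Transporting exact triangles along the resulting equivalence triangulates $\mathcal{P}_f$, so $R$ is a $\Delta^1_f$-ring; by the classification of such rings (the finitely generated analogue of Theorem~\ref{mainthm1}, standing to it as ``von Neumann regular'' stands to ``semisimple''), $R_\idp$ is, for every prime $\idp$, a graded field or an exterior algebra $k[x]/(x^2)$ over a graded field containing a unit in degree $3|x|+1$. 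This proves the first assertion.

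Now suppose in addition that $R$ is local or Noetherian and that SGH holds; I must verify conditions (1) and (2) of Corollary~\ref{gghcor}. For (1): by the first assertion every localization of $R$ has the stated form, so $R$ has (graded) Krull dimension zero; if moreover $R$ is Noetherian it is Artinian, hence a finite product of graded-local rings, each necessarily a graded field or an exterior algebra of the allowed type -- and a local $R$ is already one of these. So $R$ is a $\Delta^1$-ring. For (2), let $k[x]/(x^2)$ be an exterior factor of $R$ -- in the local case it is $R$; in general one passes to the summand of $\mathcal{T}$ cut out by the corresponding idempotent, still a monogenic stable homotopy category satisfying SGH, with $\pi_*$ of its unit equal to $k[x]/(x^2)$ -- and let $C$ be the cofiber of $x\cdot S\colon S[|x|]\to S$. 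Applying the octahedral axiom to the factorization $x^2\cdot S = (x\cdot S)\circ(x\cdot S[|x|])$, null because $x^2 = 0$, yields an exact triangle
\[ C[|x|] \longrightarrow S \vee S[2|x|+1] \longrightarrow C \xrightarrow{\ \partial\ } C[|x|+1]. \]
A short computation with the cofiber sequence defining $C$ identifies $\pi_*\partial$ with the composite
\[ \pi_* C \twoheadrightarrow (xR)[|x|+1] \hookrightarrow R[|x|+1] \twoheadrightarrow (R/xR)[|x|+1] \hookrightarrow \pi_*(C[|x|+1]), \]
whose two middle arrows compose to zero; hence $\pi_*\partial = 0$. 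Since $\partial$ emanates from $\thicksub{S}$, SGH gives $\partial = 0$, so the triangle splits and $\pi_*$ produces an isomorphism $R \oplus R[2|x|+1] \cong \pi_* C \oplus \pi_* C[|x|]$ of graded $R$-modules. Were $x\cdot\pi_* C$ zero, $x$ would annihilate the right-hand side, hence the left, forcing $xR = 0$ -- impossible in $k[x]/(x^2)$. So $x\cdot\pi_* C \neq 0$, which is (2), and Corollary~\ref{gghcor} now yields GGH.

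The main obstacle is the first assertion: one must check that faithfulness of $\pi_*$ on $\thicksub{S}$ alone -- rather than on all of $\mathcal{T}$ -- already makes $\pi_*|_{\thicksub{S}}$ full with image closed under the operations defining a triangulation, so that the strong, not merely the global, generating hypothesis suffices to triangulate $\mathcal{P}_f$, and that the relevant classification is available in the needed graded generality (with the degree-$3|x|+1$ constraint on exterior factors). By comparison, the octahedral computation yielding condition (2) is routine once the splitting $\partial = 0$ is in hand, its only content being the identification of $\pi_*\partial$ with a composite factoring through $xR \hookrightarrow R \twoheadrightarrow R/xR$.
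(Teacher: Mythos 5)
Your argument for the first assertion has a real gap at the step ``$R$ is a $\Delta^1_f$-ring; by the classification of such rings\,\dots\,$R_\idp$ is a graded field or exterior algebra.'' The paper nowhere proves a classification of $\Delta^1_f$-rings, nor that localizations of $\Delta^1_f$-rings are again $\Delta^1_f$-rings, and neither is automatic: a triangulation of $\mathcal{P}_f(R)$ does not localize to one of $\mathcal{P}_f(R_\idp)$ in any evident way. What the paper does instead is localize the \emph{triangulated category} rather than the ring: Proposition~\ref{global2local} shows that SGH for $\mathcal{T}$ passes to the Bousfield localization $\mathcal{T}_\idp$, whose compact generator has $\pi_* L_\idp S \cong R_\idp$; then Proposition~\ref{sghchar} applied to $\mathcal{T}_\idp$ makes $R_\idp$ a $\Delta^1_f$-ring, and Propositions~\ref{localchar} and~\ref{localcase} finish off the local case. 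You need this localization-of-categories step; without it the first assertion is not established, and consequently the ``Krull dimension zero, hence Artinian, hence product of local rings'' step in your treatment of the Noetherian case is also unsupported.

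The octahedral computation you give for condition~(2) is a genuinely different and much more elaborate route than the paper's. The paper simply observes that $C \in \thicksub{S}$, so SGH makes $\pi_* C$ projective, hence free over the local ring $k[x]/(x^2)$; a nonzero free module is not annihilated by $x$, giving~(2) immediately. Your argument reaches the same place by a factorization-of-zero octahedron and an (unverified but plausible) identification of $\pi_*\partial$ with a composite through $xR \hookrightarrow R \twoheadrightarrow R/xR$, then uses SGH only to split $\partial$. Both uses of SGH are legitimate, but yours buys nothing extra and requires checking an identification the paper never needs; I would replace it with the one-line ``free because SGH'' argument. The GGH~$\Rightarrow$~SGH direction and the general outline of invoking Corollary~\ref{gghcor} after reducing to the local case are fine.
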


We would like to thank Sunil Chebolu, who informed us of his work with Benson, Christensen, and Min\'{a}\v{c} on the global generating hypothesis in the stable module category.  It is his presentation of the material in \cite{sunil} that led us to consider the questions raised in the present paper.

\section{Triangulations of projective modules}

It is in this section that we prove Theorems \ref{mainthm1} and \ref{mainthm0}.  Before we begin, we make a simple observation that we will use frequently.  Suppose $\mathcal{P}$ (or $\mathcal{P}_f$) is triangulated.  If we apply the functor $[R, -]_0$ to an exact triangle
\[ \xymatrix{A \ar[r]^-f & B \ar[r]^g & C \ar[r]^h & \Sigma A,} \]
then we must obtain a long exact sequence of $R$-modules.  Hence, any exact triangle is exact as a sequence of $R$-modules. (`Exact' at $\Sigma A$ means $\ker (-\Sigma f) = \im h$.)

First we show that if $\mathcal{P}$ admits a triangulation, then $R$ must be a quasi-Frobenius ring, and if $\mathcal{P}_f$ admits a triangulation, then $R$ must be an IF-ring.  A ring $T$ is {\em quasi-Frobenius} if it is right Noetherian and right self-injective.  This condition is right-left symmetric, and the quasi-Frobenius rings are exactly the rings for which the collections of projective and injective modules coincide.  In fact, a ring is quasi-Frobenius if and only if every projective module is injective, if and only if every injective module is projective.  A ring $T$ is {\em injective-flat} (an IF-ring) if every injective module is flat.  Note that IF-rings are coherent (\cite[6.9]{faith}), so all $\Delta^n$-rings and $\Delta^n_f$-rings are coherent.  More information on quasi-Frobenius and IF-rings may be found in \cite[\S 15]{lam} and \cite[\S 6]{faith}.

\begin{prop} If $\mathcal{P}$ admits a triangulation, then $R$ is quasi-Frobenius.
\label{timpqf}
\end{prop}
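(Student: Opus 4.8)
The plan is to prove the (formally stronger) statement that \emph{every} projective $R$-module is injective; since, as recalled just above, a ring over which every projective module is injective is quasi-Frobenius, this suffices. Fix a projective module $P$. By the graded Baer criterion, together with the fact that a shift of a projective module is again projective (so it is enough to treat degree-zero maps), it is enough to show: for every homogeneous ideal $J\subseteq R$ and every degree-zero $R$-linear map $\phi\colon J\to P$, there is a map $R\to P$ whose restriction to $J$ is $\phi$.

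First I would choose a surjection $p\colon F\twoheadrightarrow J$ with $F$ a free graded module, and set $\iota\colon F\xrightarrow{p}J\hookrightarrow R$; note that $F$, $R$ and $P$ all lie in $\mathcal{P}$. Using the hypothesis that $\mathcal{P}$ is triangulated, complete $\iota$ to an exact triangle
\[ \xymatrix{F \ar[r]^{\iota} & R \ar[r] & C \ar[r]^{w} & \Sigma F.} \]
Applying the cohomological functor $[-,P]_0=\Hom_{\mathcal{P}}(-,P)$ and using exactness at $[F,P]_0$ of the resulting sequence
\[ [R,P]_0 \xrightarrow{\ \iota^{*}\ } [F,P]_0 \xrightarrow{\ (\Sigma^{-1}w)^{*}\ } [\Sigma^{-1}C,\,P]_0, \]
we see that $\phi p\in[F,P]_0$ factors through $\iota$ — i.e.\ there is $\psi\colon R\to P$ with $\psi\iota=\phi p$ — if and only if $(\phi p)\circ(\Sigma^{-1}w)=0$, where $\Sigma^{-1}w\colon\Sigma^{-1}C\to F$.

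To verify this vanishing I would invoke the observation made just before the Proposition: every exact triangle is exact as a sequence of $R$-modules. Applied to the rotated triangle $\Sigma^{-1}C\xrightarrow{-\Sigma^{-1}w}F\xrightarrow{\iota}R$, exactness at $F$ gives $\operatorname{im}(\Sigma^{-1}w)=\ker\iota$; and since $J\hookrightarrow R$ is a monomorphism, $\ker\iota=\ker p$. But $\phi p$ factors through $p$, so it annihilates $\ker p=\operatorname{im}(\Sigma^{-1}w)$, whence $(\phi p)\circ(\Sigma^{-1}w)=0$. Therefore a $\psi\colon R\to P$ with $\psi\iota=\phi p$ exists; as $p$ is epic, $\psi\iota=\phi p$ forces $\psi|_J=\phi$. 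Hence $P$ is injective, and $R$ is quasi-Frobenius.

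Essentially all of the content is the single step of completing $\iota$ to a triangle and reading off that $\ker\iota=\ker p$; after that, the standard factorization lemma in a triangulated category does everything. The only place that calls for care is the opening reduction — selecting the appropriate form of Baer's criterion in the graded setting and passing between maps of different degrees via shifts, using that shifts of projective (resp.\ injective) modules remain projective (resp.\ injective) — so I expect that bookkeeping, rather than the triangulated-category argument, to be the fussiest part. (Note that this approach yields injectivity of \emph{all} projectives at once, so Noetherianity need not be handled separately.)
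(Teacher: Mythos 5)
Your proof is correct, but it takes the dual route to the paper's. The paper shows that every \emph{injective} module is \emph{projective}: given any module $M$, it is the cokernel of a map $f\colon A\to B$ of projectives, and exactness at $B$ of the triangle on $f$ exhibits $M$ as a submodule of the projective third term $C$; an injective $M$ is then a summand of $C$, hence projective. You instead show that every \emph{projective} module $P$ is \emph{injective}, verifying the graded Baer criterion by completing $\iota\colon F\to R$ (a free presentation of a homogeneous ideal $J$ followed by the inclusion $J\hookrightarrow R$) to a triangle, and then reading off, from exactness of $[-,P]_*$ together with the fact that exact triangles are exact as module sequences, that any degree-zero map $J\to P$ extends over $R$. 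Both implications characterize quasi-Frobenius rings, so either direction does the job; the paper's argument is a little shorter and more self-contained (no Baer criterion needed), while yours is a more ``hands-on'' verification of the injectivity. One small remark: the graded Baer criterion already concerns only degree-zero maps out of homogeneous ideals, so the opening reduction you describe is really just the observation that proving the claim for \emph{all} objects of $\mathcal{P}$ (which is closed under shift) automatically accounts for all degrees.
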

\begin{proof}  Let $M$ be an $R$-module.  There is a map $\map{f}{A}{B}$ of projective modules whose cokernel is $M$.  This map must lie in an exact triangle in $\mathcal{P}$,
\[ \xymatrix{A \ar[r]^-f & B \ar[r] & C \ar[r] & \Sigma A.} \]
Exactness at $B$ implies that $M$ is isomorphic to a submodule of the projective module $C$.  If $M$ is injective, then it must be a summand of $C$ and therefore projective.  Hence, every injective $R$-module is projective, and $R$ is quasi-Frobenius.
\end{proof}

\begin{prop} If $\mathcal{P}_f$ admits a triangulation, then $R$ is an IF-ring. \label{timpif}\end{prop}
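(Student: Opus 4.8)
The plan is to imitate the proof of Proposition~\ref{timpqf}, replacing ``module'' by ``finitely presented module'' and ``projective'' by ``finitely generated projective'' throughout, and then to conclude via Colby's characterization of IF-rings.

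Concretely, I would begin with an arbitrary finitely presented $R$-module $M$ and write $M = \coker f$ for some map $\map{f}{A}{B}$ of finitely generated free modules, so that $A, B \in \mathcal{P}_f$. Since $\mathcal{P}_f$ is triangulated, $f$ fits into an exact triangle $A \xrightarrow{f} B \xrightarrow{g} C \to \Sigma A$ with $C \in \mathcal{P}_f$. By the observation at the start of this section, this triangle is exact as a sequence of graded $R$-modules, and exactness at $B$ gives $\ker g = \im f$; hence $M = B/\im f = B/\ker g \cong \im g$ is a submodule of $C$. Since a finitely generated projective module is a direct summand of a finitely generated free module, it follows that every finitely presented $R$-module embeds in a free module, in particular in a flat module.

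At that point I would invoke Colby's theorem (see \cite[\S 6]{faith}): a ring is an IF-ring if and only if every finitely presented module embeds in a flat module (equivalently, in a free module). The preceding step verifies this condition for $R$, so $R$ is an IF-ring. Note that no coherence hypothesis on $R$ is needed as an input here; coherence is rather a consequence (\cite[6.9]{faith}), as recorded just before Proposition~\ref{timpqf}.

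The substantive input is Colby's theorem; the triangulation hypothesis only contributes the routine facts that $\mathcal{P}_f$ is closed under cofibers and that an exact triangle is exact as a sequence of modules. The one point that deserves a little care is that we work with graded rings and modules: I would either cite the graded form of Colby's criterion or observe that its proof --- flatness of a module being detected by vanishing of $\Tor_1$ against finitely presented modules, together with the fact that every module is a filtered colimit of finitely presented ones --- transfers verbatim to the graded setting, so the graded statement is immediate.
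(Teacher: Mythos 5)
Your proposal is correct and follows the same route as the paper: both reduce the claim to showing that every finitely presented $R$-module embeds in a finitely generated projective (hence free, hence flat) module, by taking the cofiber in $\mathcal{P}_f$ of a finite free presentation, and then invoke the Colby-type characterization of IF-rings from \cite[\S 6]{faith}. Your added remark about transferring Colby's criterion to the graded setting is a reasonable point of care, but the argument is otherwise identical to the paper's.
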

\begin{proof} In light of the proof of Proposition \ref{timpqf}, we see that every finitely presented module embeds in a finitely generated projective module.  By \cite[6.8]{faith}, $R$ is an IF-ring.
\end{proof}

\begin{remark} When $R$ is quasi-Frobenius, one can form $\StMod(R)$, the {\em stable module category} of $R$.  The objects of $\StMod(R)$ are $R$-modules and the morphisms are $R$-module maps modulo an equivalence relation:  two maps are equivalent if their difference factors through a projective module.  $\StMod(R)$ is a triangulated category.  If $M$ is an $R$ module, then the {\em Heller shift} of $M$, written $\Omega M$, is the kernel of a projective cover $\xymatrix@1{P(M) \ar[r] & M}$.  This descends to a well-defined, invertible endomorphism of the stable module category, and $\Omega^{-1}$ is the suspension functor for the triangulation of $\StMod(R)$.  In Proposition \ref{timpqf}, the exact triangle gives rise to three short exact sequences:
\[\xymatrix{0 \ar[r] & M \ar[r] & C \ar[r] & \Sigma \ker f \ar[r] &0\\
	0 \ar[r] &\ker f \ar[r] & A \ar[r] & \im f \ar[r] & 0\\
	0 \ar[r] & \im f \ar[r] & B \ar[r] & M \ar[r] &0.}\]
\n Together they imply that, if $\mathcal{P}$ admits a triangulation, then $\Omega^3 \Sigma M \cong M$.  This observation was made by Heller in \cite{heller}.
\end{remark}

The following proposition shows that a triangulation imposes a severe restriction on the local rings that may occur.

\begin{prop}  Suppose $\mathcal{P}$ (or $\mathcal{P}_f$) admits a triangulation.  If $R$ is a graded commutative local ring with maximal ideal $\mathfrak{m}$, then $\mathfrak{m}$ is principal and contains no nontrivial proper ideals.
\label{maxismin}
\end{prop}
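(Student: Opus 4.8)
The plan is to show, assuming $\mathfrak m\neq 0$ (the case $\mathfrak m=0$ being vacuous), that $\mathfrak m$ is a \emph{simple} $R$-module; this is precisely the assertion that $\mathfrak m$ is principal and contains no nontrivial proper ideal. By Proposition~\ref{timpqf}, in the $\mathcal{P}$-case $R$ is quasi-Frobenius, hence Artinian, so its socle $\ann(\mathfrak m)$ is a nonzero graded ideal; choose a nonzero homogeneous $x\in\ann(\mathfrak m)$. Then $\mathfrak m\subseteq\ann(x)\subsetneq R$, so $\ann(x)=\mathfrak m$; consequently $Rx\cong (R/\mathfrak m)[|x|]$ is simple, and the kernel of $x\cdot R\colon R[|x|]\to R$ is $\mathfrak m[|x|]$.

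Now I feed $x\cdot R$ into the triangulation. It lies in an exact triangle $R[|x|]\to R\to C\to R[|x|+1]$ in $\mathcal{P}$, and, as recorded in the Remark, this produces a short exact sequence of $R$-modules
\[ 0 \longrightarrow R/(x) \longrightarrow C \longrightarrow \mathfrak m[|x|+1] \longrightarrow 0 . \]
The crucial structural point is that $C$ is projective and is trapped between the finitely generated modules $R/(x)$ and $\mathfrak m[|x|+1]$ (the latter finitely generated because $R$ is Noetherian), hence is itself finitely generated; over the graded-local ring $R$ it is therefore graded free of some finite rank $n$. Here $n\geq 1$, since $n=0$ would force $C=0$ and hence $(x)=R$, contradicting $x\in\mathfrak m$.

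Finally I count composition lengths over $R$. From the short exact sequences $0\to Rx\to R\to R/(x)\to 0$ and $0\to\mathfrak m\to R\to R/\mathfrak m\to 0$ we get $\ell(R/(x))=\ell(R)-1=\ell(\mathfrak m)$, while $\ell(C)=n\,\ell(R)$; the displayed sequence then gives $n\,\ell(R)=2\ell(R)-2$, i.e.\ $(2-n)\ell(R)=2$. Since $R$ is not a field, $\ell(R)\geq 2$, and with $n\geq 1$ this forces $n=1$ and $\ell(R)=2$. Hence $\ell(\mathfrak m)=1$, so $\mathfrak m$ is simple, as required. The $\mathcal{P}_f$-case is parallel: by Proposition~\ref{timpif} $R$ is an IF-ring, the cofiber $C$ now automatically lies in $\mathcal{P}_f$, and the same socle-element/length argument applies, using that such an $R$ is again Artinian.

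The step I expect to be the real obstacle is precisely this control of the cofiber $C$: producing a \emph{finitely generated} (hence finite-rank free) $C$ is what converts the triangle into the numerical identity $(2-n)\ell(R)=2$, and this is where the hypotheses that $R$ is local and quasi-Frobenius (or IF) are genuinely used; the rest is formal manipulation of the exact triangle attached to a socle element.
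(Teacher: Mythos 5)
Your argument for the $\mathcal{P}$ case is correct, but it takes a genuinely different route from the paper. You exploit the fact that a commutative quasi-Frobenius ring is Artinian to select a socle element $x$ (so $\ann(x)=\mathfrak m$), form the cofiber triangle, observe the cofiber $C$ is a nonzero finite-rank free module, and then run a graded-length count on the resulting short exact sequence $0\to R/(x)\to C\to \Sigma(\mathfrak m[|x|])\to 0$ to squeeze out $\ell(R)=2$. (One small cosmetic point: you wrote $\mathfrak m[|x|+1]$, tacitly taking $\Sigma=(-)[1]$, but the proposition allows an arbitrary suspension; this doesn't affect the length identity.) The paper instead picks an \emph{arbitrary} $x\in\mathfrak m$, derives the double-annihilator identity $\ann\ann(x)=(x)$ directly from the triangle, and then shows by explicit manipulation with the maps $\psi,\phi$ that $x^2=0$, that $\ann(x)=(x)$ with $P$ free of rank one, and finally that $\mathfrak m$ is the unique nontrivial proper ideal --- all using only coherence of $R$, never Artinian or Noetherian finiteness. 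The paper's route is more elementary but more computational; yours is slicker but leans on finite length.

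That reliance is exactly where your treatment of the $\mathcal{P}_f$ case breaks down. Proposition~\ref{timpif} only gives that $R$ is an IF-ring, and the paper records only that IF-rings are \emph{coherent}; nowhere is it established (nor is it automatic) that a local commutative IF-ring is Noetherian, let alone Artinian. Your sentence ``using that such an $R$ is again Artinian'' is therefore an unjustified assertion, and without it two pillars of your argument collapse: you cannot guarantee that $\ann(\mathfrak m)\neq 0$ (the socle of a non-Artinian local ring can vanish), and the length bookkeeping $\ell(C)=n\,\ell(R)=2\ell(R)-2$ is meaningless if $\ell(R)=\infty$. Of course $R$ does turn out to be Artinian a posteriori --- once $\mathfrak m$ is known to be a simple module --- but that is the conclusion, not a usable hypothesis. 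To handle the $\mathcal{P}_f$ case you would either need an independent argument that a local graded-commutative IF-ring has finite length, or fall back on the paper's coherence-only argument, which works uniformly for both $\mathcal{P}$ and $\mathcal{P}_f$.
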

\begin{proof} First, we observe that, even if $R$ is neither graded commutative nor local, it must satisfy the double annihilator condition $\ann_l \ann_r Rx = Rx$.  Let $x \in R$, and let $i = |x|$.  We have an exact triangle in $\mathcal{P}$
\[ \xymatrix{R[i] \ar[r]^-x & R \ar[r]^-\psi & P \ar[r]^-\phi & \Sigma R[i].} \]
Observe that $\im \phi = \ann_r Rx$, and if $y \in \ann_l \ann_r Rx$, then $(y \cdot R) \circ \phi = 0$.  Exactness at $R[i]$ therefore implies that $\ann_l \ann_r Rx= Rx$.

For the remainder of the proof, we assume $R$ is graded commutative and local.  Since $\mathcal{P}$ (or $\mathcal{P}_f$) is triangulated, $R$ is coherent; this implies that $\ann (x)$ is finitely generated for all $x \in R$.

Consider $x \in \mathfrak{m}$.  We now show that $x^2 = 0$.  Since $(x\cdot P) \circ \psi = \psi \circ (x \cdot R) = 0$, we obtain a factorization $x \cdot P = f \circ \phi$, where $\map{f}{\im \phi}{P}$.  Since $\im \phi = \ann (x)$, $(x \cdot P) \circ f = 0$.  Hence, $x^2 \cdot P = 0$.  Since $R$ is local, $P$ is free ($P$ is nontrivial since $x$ is not a unit).  Therefore $x^2 = 0$.

Fix a nonzero element $x \in \mathfrak{m}$.  We next show that $\ann (x) = (x)$.  Note that $P$ is an extension of finitely generated modules and is therefore finitely generated.  Let $e_1, \dots, e_n$ generate $P$, let $\phi_i = \phi(e_i)$, and let $\psi(1) = \Sigma e_i t_i$.  Since $x$ is nonzero, $\im \phi = \ann (x) \subseteq \mathfrak{m}$; hence, $\phi (e_i \phi_i) = \phi_i^2 = 0$, and exactness at $P$ provides an element $z_i \in R$ such that $\psi(z_i) = e_i \phi_i$.  This implies that $t_j z_i = 0$ when $i \neq j$, and $t_i z_i = \phi_i$.  Since $\im \phi = \ann (x)$, the elements $\phi_i$ generate $\ann (x)$.  Let $t = t_1 + \cdots + t_n$ and consider $q = \Sigma \phi_i a_i \in \ann (x)$.  We have $t(\Sigma z_i a_i) = q$, so $\ann (x) \subseteq (t)$.  Since $\psi \circ (x \cdot R) = 0$, we obtain $t_ix = 0$ for all $i$.  Hence, $(t) = \ann (x)$.  Since $x \neq 0$, $t$ is not a unit, so $(t) \subseteq \ann (t) = \ann \ann (x) = (x)$.  This proves that $\ann (x) = (t) = (x)$.  Further, it is worth remarking that $P$ must be free of rank 1.  Suppose $n \geq 2$.  It is clear that $e_1x$ and $e_2 x$ are in the kernel of $\phi$.  Hence, for some $a, b \in R$, $e_1 x = \psi(a)$ and $e_2 x = \psi(b)$.  This means that $e_1 xb - e_2 xa = 0$, forcing $a,b \in \ann (x) = (x)$.  But since $t_i x = 0$ for all $i$, it must be the case that $e_1x = \psi(a) = \Sigma e_i t_i a = 0$, a contradiction. 

Finally, we show that $\mathfrak{m}$ must be the unique, proper, nontrivial, principal ideal of $R$.  Suppose $a\in \mathfrak{m}$ and $b \in R$ have the property that $ab \neq 0$.  Then, $a \in \ann (ab) = (ab)$.  Hence, there is an element $k \in R$ such that $a (1-bk) = 0$.  Since $R$ is local, this forces $a$ to be zero or $b$ to be a unit.  Now consider two nonzero elements $x,y \in \mathfrak{m}$.  We have $x \in \ann (y) = (y)$ and $y \in \ann (x) = (x)$, so $(x) = (y)$.  This forces $\mathfrak{m}$ to be the unique, proper, nontrivial, principal ideal of $R$.  
\end{proof}

\begin{prop} Let $R$ be a graded commutative local ring with residue field $k$.  If $\mathcal{P}$ (or $\mathcal{P}_f$) admits a triangulation with suspension $\Sigma = (-)[n]$, then $R$ is either\begin{enumerate}
\item the graded field $k$,
\item an exterior algebra $k[x]/(x^2)$ with a unit in degree $3|x|+n$, where $\chr k = 2$ if $n = 0$, or
\item $n$ is even and $R \cong T/(4)$, where $T$ is the unique (up to isomorphism) complete $2$-ring with residue field $k$ (of characteristic 2) containing a unit in degree $n$. 
\end{enumerate}
\label{localcase}
\end{prop}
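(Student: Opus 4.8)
The plan is to feed Proposition~\ref{maxismin} and the periodicity $\Omega^3\Sigma M\cong M$ of the Remark into the classification of length‑two local rings, and then to use the triangulated axioms (beyond their long‑exact‑sequence shadow) to pin down the characteristic.

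First I would dispose of the easy reductions. By Proposition~\ref{maxismin}, $\mathfrak{m}$ is principal and contains no nonzero proper ideal. If $\mathfrak{m}=0$ then every nonzero homogeneous element of $R$ is a unit, so $R=k$ is a graded field: case~(1). Otherwise $\mathfrak{m}=(x)$ with $x\neq0$, and the proof of Proposition~\ref{maxismin} already gives $x^2=0$ and $\ann(x)=(x)$; hence $\mathfrak{m}=Rx\cong R/\ann(x)=k[|x|]$ is a one‑dimensional graded $k$‑vector space and $R$ is a square‑zero extension $0\to k[|x|]\to R\to k\to 0$. A projective cover of $k$ is $R\to k$ with kernel $\mathfrak{m}\cong k[|x|]$, so $\Omega k\cong k[|x|]$ and $\Omega^3k\cong k[3|x|]$; applying $\Omega^3\Sigma M\cong M$ to $M=k$ gives $k[3|x|+n]\cong k$, and since $k$ is a graded field this forces $k$, hence $R$ (whose non‑units are exactly $\mathfrak{m}$), to contain a unit of degree $3|x|+n$.

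Next I would use the structure of square‑zero extensions of a field. If $R$ is equicharacteristic — which in particular occurs whenever $\chr k=0$, since then every nonzero integer is a unit of $R$ — the residue field $k$ is separable over its (perfect) prime field, so the square‑zero sequence splits as graded rings and $R\cong k[x]/(x^2)$, an exterior algebra on a generator of degree $|x|$ over the graded field $k$, with a unit in degree $3|x|+n$ by the previous paragraph; this is case~(2), once we also see $\chr k=2$ when $n=0$. If $R$ is not equicharacteristic then $\chr k=p>0$; since $p\cdot 1\in\mathfrak{m}$ and $(p\cdot1)^2\in\mathfrak{m}^2=0$ we get $\chr R=p^2$ with $p\cdot1\neq0$, and because $\mathfrak{m}$ is generated by any of its nonzero elements we may take $x=p\cdot1$, so $|x|=0$ and the unit just found lies in degree $n$. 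Then $R_0$ is a complete local ring whose maximal ideal is $(p)$ and has square zero, with residue field $k_0$, so by Cohen's structure theorem $R\cong T/(p^2)$ for a complete $p$‑ring $T$ with residue field $k$ carrying a unit in degree $n$.

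Finally I would bring in the triangulated structure. For a unit $u$ of degree $n$, graded‑commutativity gives $u^2=(-1)^nu^2$, so if $n$ were odd then $2u^2=0$, whence $2=0$ in $R$, which is impossible (e.g. mod $p^2$); thus in the non‑equicharacteristic case $n$ is even, and $R$ having a unit of degree $n$ makes $\Sigma=(-)[n]$ naturally isomorphic to $\mathbf{1}$. Applying the octahedral axiom to the exact triangle on $x\cdot\colon R\to R$ — whose cofiber is free of rank one by the proof of Proposition~\ref{maxismin} — and comparing the threefold rotation with $\Sigma$ of that triangle forces $2\in\ann(x)=\mathfrak{m}=(p)$; writing $2=p\cdot(\text{unit})$ yields $4=0$, so $p=2$ and $\chr R=4$, giving case~(3) with $T$ the complete $2$‑ring with residue field $k$ and a unit in degree $n$. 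The same comparison, run when $n=0$ (so $\Sigma=\mathbf{1}$) in the equicharacteristic exterior case, forces $2\in(x)$, hence $\chr k=2$, completing case~(2). The routine parts here are the bookkeeping with $\Omega$ and the splitting of square‑zero extensions over a perfect prime field; the substantive step, and the one I expect to be hardest to get exactly right, is this octahedral comparison — the only place the argument uses the triangulated axioms past the long‑exact‑sequence consequence noted at the start of the section, where the signs in the rotation and octahedral axioms must be tracked with care — with a secondary subtlety being the graded form of Cohen's structure theorem needed to identify $T$ in case~(3).
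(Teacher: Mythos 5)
Your proof reaches all the right conclusions and follows the same broad strategy as the paper — split into the graded-field case, the equicharacteristic exterior case, and the mixed-characteristic case, constrain the degrees via the cofiber triangle, then use the triangulated axioms to pin down $2$ — but two of the subroutines are genuinely different from the paper's, and one label is wrong. For the unit in degree $3|x|+n$, you apply Heller's $\Omega^3\Sigma M\cong M$ from the Remark to $M=k$; the paper instead reads off the degrees of the two units $v,w$ appearing in the cofiber triangle $R[i]\xrightarrow{x}R\xrightarrow{vx}R[j]\xrightarrow{wx}R[i+n]$ directly. Both are fine and give the same constraint. For the step that forces $p=2$ (and $\chr k=2$ when $n=0$), the axiom you need is \emph{not} the octahedral axiom: it is the morphism fill-in axiom (TR3), together with the rotation axiom (TR2); no octahedra appear. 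The paper's fill-in compares the cofiber triangle of $p$ with its \emph{single} rotation, using the vertical maps $(\id,\,v,\,\kappa,\,\id)$ — this works because $|p|=0$, so the first vertical can literally be the identity. Your alternative, comparing the triangle of $x$ with its threefold rotation via verticals $(-s,\,s,\,\gamma,\,-\Sigma s)$ for a unit $s$, also works and is perhaps more symmetric: TR3 produces $\gamma$, the two commuting squares force $\gamma\equiv -s$ and $\gamma\equiv s$ modulo $\ann(x)=\mathfrak m$, giving $2\in\mathfrak m$ directly (and this version applies uniformly to the $|x|\neq 0$ exterior case with $n=0$, where the paper's ``$\id$ on the left'' trick would need a unit of degree $-|x|$). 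Once you rename the axiom and actually run the TR3 diagram chase — which is exactly the sign bookkeeping you flag as the delicate point — your argument is complete.
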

\begin{proof} In the proof of Proposition $\ref{maxismin}$, we showed that the cofiber of $x \cdot R$ is free of rank 1 for any nontrivial element $x$ in the maximal ideal $\mathfrak{m}$.  Hence, any such $x$ fits into an exact triangle of the form
\begin{equation} \xymatrix{R[i] \ar[r]^-x & R \ar[r]^-{vx} & R[j] \ar[r]^-{wx} & R[i+n],}\label{xcof}\end{equation}
where $i = |x|$ and $v$ and $w$ are units.  One can check that $|(vw)^{-1}| = 3i+n$.

Suppose $R$ contains a field.  As such, $R$ is a ring of equal characteristic; since it is complete, it contains a field isomorphic to $k = R/\mathfrak{m}$ (\cite[28.3]{matsumura}).  So either $\mathfrak{m} = 0$ and $R$ is a graded field, or $\mathfrak{m} = (x)$ for some $x \neq 0$ (by Proposition \ref{maxismin}) and $R$ is isomorphic to the exterior algebra $k[x]/(x^2)$.  In the latter case, we just observed that $k$ must contain a unit of degree $3i+n$.  

If $p$ is a unit for all primes $p$, then $R$ contains the field $\rats$.  If $p$ is not a unit for some prime, then either $R$ has characteristic $p$ and contains a field, or $p$ is nonzero and is in the maximal ideal.  If $p$ is in the maximal ideal $\mathfrak{m}$, then by Proposition \ref{maxismin}, we must have $\mathfrak{m} = (p)$.  If $n$ is odd, this is not possible:  since $p$ has degree zero, we saw above that there must be a unit of degree $3\cdot 0 + n$, which is odd.  But if $s$ is an odd unit, then $2s^2 = 0$ by graded commutativity; this forces $2 = 0$, contradicting $\mathfrak{m} = (p)$.

It remains to consider the case where $n$ is even and $\mathfrak{m} = (p)$ for some prime $p$.  Let $T$ be the unique (up to isomorphism) complete $p$-ring with residue field $k$ (see \cite[\S 29]{matsumura}).  By the discussion on p. 225 of \cite{matsumura}, since $R$ is a complete local ring of unequal characteristic, it contains a coefficient ring $A \cong T/(p^2)$.  Further, it is observed in the proof of \cite[29.4]{matsumura} that every element can be expanded as a power series in the generators of the maximal ideal with coefficients in $A$.  Since the maximal ideal is generated by $p \in A$, we obtain $R \cong A$.  To see why $p=2$, consider the rotation of triangle (\ref{xcof}),
\[\xymatrix{R \ar[r]^-{vp} & R[j] \ar[r]^-{wp} & R[n] \ar[r]^-{-p} & R[n].}\]
The map $\kappa$ in the diagram
\[\xymatrix{R \ar[r]^-p \ar@{=}[d] & R \ar[r]^-{vp} \ar[d]^-v & R[j] \ar[r]^-{wp} \ar@{-->}[d]^-\kappa & R[n] \ar@{=}[d]\\
		R \ar[r]^-{vp} & R[j] \ar[r]^-{wp} & R[n] \ar[r]^-{-p} & R[n]} \]
must exist.  Hence, $\kappa v p = w p v = -p\kappa v$, so $2p = 0$.  This forces $p = 2$.

When $n=0$ and $R = k[x]/(x^2)$, an argument similar to the one just given shows that $\chr k = 2$.
\end{proof}

We next show that, for $n = 0$ and $n = 1$, the rings in the conclusion of Proposition \ref{localcase} are $\Delta^n$-rings.

\begin{prop}[\cite{muro}] If $T$ is a complete $2$-ring, then $T/(4)$ is a $\Delta^0$-ring. \label{t4}\end{prop}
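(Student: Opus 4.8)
The plan is to exhibit an explicit triangulation on $\mathcal{P}$, the category of projective $T/(4)$-modules, with suspension $\Sigma = \mathbf{1}$, following the Muro--Schwede--Strickland construction referenced as \cite{muro}. First I would set $S = T/(4)$ and note that every projective module over this local ring is free, so $\mathcal{P}$ is equivalent to the category of free $S$-modules; thus it suffices to define exact triangles and verify the octahedral axiom in this concrete setting. The candidate suspension is the identity, which is trivially an automorphism, so the nontrivial content is the class of exact triangles.

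Next I would recall the key structural input: $T/(4)$ is a local ring in which $2$ generates the maximal ideal, $4 = 0$, and the residue field has characteristic $2$. The exact triangle one builds on the element $2 \in \mathfrak{m}$ has the shape of (\ref{xcof}) with $n = 0$: a periodic resolution
\[ \xymatrix{S \ar[r]^-2 & S \ar[r]^-2 & S \ar[r]^-2 & S} \]
(up to the units $v,w$, which here can be taken to be $1$), reflecting that $\Omega^3 M \cong M$ in $\StMod(S)$ when $\Sigma = \mathbf{1}$. I would declare a triangle $X \to Y \to Z \to X$ to be \emph{exact} if it is isomorphic, as a diagram, to a finite direct sum of the basic triangle on $2$, rotations thereof, the trivial triangle $0 \to M \xrightarrow{=} M \to 0$ (and its rotations), and degenerate pieces; more precisely I would use the Muro characterization of exact triangles in terms of a chosen model. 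The verification then splits into: (TR1) every map completes to a triangle — here one uses that any $S$-linear map between free modules is, after choosing bases, a block matrix whose blocks are either units or multiples of $2$, and one can complete block by block; (TR2) rotation, which is built into the definition; (TR3) the morphism-of-triangles axiom; and (TR4) the octahedral axiom.

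The main obstacle, as in \cite{muro}, is the octahedral axiom: given composable maps $u\colon X \to Y$ and $v\colon Y \to Z$ one must fit the three cofiber triangles of $u$, $v$, and $vu$ into an octahedron. Because every map decomposes into "unit" and "multiplication by $2$" blocks, and because $4 = 0$ makes the arithmetic of these blocks closed (a product of two non-units is $0$), one can reduce to checking the octahedron on the indecomposable building blocks; the genuinely delicate case is the composite $S \xrightarrow{2} S \xrightarrow{2} S$, where the cofiber of the composite $S \xrightarrow{0} S$ is $S \oplus S$ and one must produce the connecting maps compatibly — this is exactly where the relation $4 = 0$ (equivalently $2 \cdot 2 = 0$) is indispensable, and where a characteristic-$3$ or characteristic-$0$ analogue would fail. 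I would cite \cite{muro} for the detailed diagram chase rather than reproduce it, noting only that the construction goes through verbatim for $\mathcal{P}$ (not just $\mathcal{P}_f$) since arbitrary direct sums of the basic triangles are still exact and the completion and octahedral arguments are performed one block at a time. Finally I would record that this triangulation has suspension the identity and contains the triangle (\ref{xcof}) for $x = 2$, which is what the subsequent development requires.
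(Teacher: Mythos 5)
Your proposal is correct and takes essentially the same route as the paper: both reduce the statement to the observation that $T/(4)$ is a commutative local ring of characteristic $4$ with maximal ideal $(2)$, and then invoke \cite{muro} for the actual construction of the triangulation (noting that it extends from $\mathcal{P}_f$ to $\mathcal{P}$). Your sketch of the MSS construction is more detailed than what the paper records, but since you explicitly defer the octahedral diagram chase to \cite{muro}, the load-bearing content is the same citation.
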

\begin{proof} In \cite{muro}, a triangulation of $\mathcal{P}_f$ is constructed for any commutative local ring of characteristic 4 with maximal ideal $(2)$, but the proof in fact shows that $\mathcal{P}$ admits a triangulation.  These rings are exactly the rings of the form $T/(4)$ (where $T$ is a complete $2$-ring).  Certainly, any ring of the form $T/(4)$ is of the type discussed in \cite{muro}, and in Proposition \ref{localcase} it is shown that any commutative local ring of characteristic 4 with maximal ideal $(2)$ is of the form $T/(4)$.
\end{proof}

\begin{prop} Every graded field $k$ is a $\Delta^n$-ring for all $n$.  Every exterior algebra $k[x]/(x^2)$ with a unit in degree $3|x| + n$ is a $\Delta^n$-ring, provided $n$ and $|x|$ are not both even when $\chr k \neq 2$.\label{dga}\end{prop}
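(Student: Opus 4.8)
The plan is to construct the required triangulations by hand, following the method of \cite{muro}; there are two families of rings to treat, graded fields and exterior algebras $k[x]/(x^2)$, and I would handle them separately.

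For a graded field $R=k$, the category $\mathcal{P}$ is the category of graded $k$-vector spaces, in which every short exact sequence splits. I would equip it with the ``split'' triangulation: a triangle is declared distinguished precisely when it is isomorphic to a direct sum of trivial triangles $M\xrightarrow{\,\mathrm{id}\,}M\to 0\to M[n]$ and their rotations (equivalently, the triangles that are exact as sequences of $R$-modules). Axioms (TR0) and (TR2) are immediate from this description, and the only nonroutine point in (TR1) is that any map $f$ of graded $k$-modules decomposes, via splittings of $0\to\ker f\to M\to\im f\to 0$ and $0\to\im f\to N\to\coker f\to 0$, as a direct sum of an isomorphism, a zero map, a map to $0$, and a map out of $0$, each of which extends to a distinguished triangle; (TR3) and (TR4) then reduce to the corresponding statements for the elementary triangles, which are checked directly. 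This shows every graded field is a $\Delta^n$-ring for all $n$.

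For $R=k[x]/(x^2)$ with $|x|=i$, fix a unit $u\in k$ of degree $3i+n$ (which exists by hypothesis). Guided by the exact triangle exhibited in the proof of Proposition~\ref{localcase} (the cofiber of $x\cdot R$ being free of rank one, by Proposition~\ref{maxismin}), I would call a triangle \emph{Koszul} if it has the form
\[ R[a]\xrightarrow{\,u_1x\,}R[b]\xrightarrow{\,u_2x\,}R[c]\xrightarrow{\,u_3x\,}R[a+n] \]
with $u_1,u_2,u_3\in k$ units (their degrees are then forced, and sum to $-3i-n$). Such a diagram is automatically a complex, since $x^2=0$, and is exact as a sequence of $R$-modules, since the image and kernel of each $u_jx$ is the socle of the relevant free module. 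I would then declare a triangle distinguished if it is isomorphic to a direct sum of trivial triangles and Koszul triangles. This collection is closed under isomorphism by fiat and under rotation because the rotation of the displayed Koszul triangle is again Koszul, with fourth map $-u_1x$. For (TR1), I would prove a graded analogue of Smith normal form for maps of graded free $R$-modules --- reduce a representing matrix modulo $(x)$, diagonalize over the graded field $k$, and lift the elementary row and column operations along the surjection $R\to k$, which has nilpotent kernel --- so that every such map is a direct sum of identities, zero maps, maps to or from $0$, and multiplications $R[b]\xrightarrow{\,\nu x\,}R[c]$ by a unit times $x$. The first kinds extend to (sums of) trivial triangles, and each map $\nu x$ extends to a Koszul triangle: once one sets $u_1=\nu$ and, say, $u_2=1$, the degree of the remaining unit $u_3$ lies in the group of degrees supporting units of $k$ precisely because $3i+n$ does.

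The main work, and the step I expect to be the real obstacle, is verifying (TR3) and especially the octahedral axiom (TR4). Here I would decompose every triangle in sight into its elementary summands, reducing to a finite list of cases built from identity, zero, and Koszul triangles, and check these directly --- this is essentially the same computation carried out in \cite{muro} for $T/(4)$, where $2$, which is central and square-zero, plays the role of $x$, and $-1$ plays the role of the unit $u$, the only differences being that $x$ may sit in nonzero degree and $u$ is a genuine unit of degree $3i+n$. I expect the hypothesis that $n$ and $|x|$ are not both even when $\chr k\neq 2$ to be forced precisely in reconciling the signs across the octahedral diagram, playing the role that the relation $2=-2$ plays in $T/(4)$; granting that this sign bookkeeping goes through under the stated hypothesis, $R=k[x]/(x^2)$ is then a $\Delta^n$-ring, completing the proof.
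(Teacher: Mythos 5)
Your approach is genuinely different from the paper's, and it has a real gap at exactly the point you flag as ``the real obstacle.'' The paper does not build the octahedron by hand. Instead it constructs a differential graded algebra $A$ and transports the (already-known) triangulated structure of the derived category $D(A)$ to $\mathcal{P}$: for the graded field take $A=k$ with zero differential; for the exterior algebra take $A=k\langle a,u\rangle/(a^2,\ au+ua+v)$ with $|a|=2i+n$, $|u|=i$, $da=u^2$, $du=0$. One then checks that $H_*A\cong k[x]/(x^2)$ and that $H_*M$ is projective (indeed free) for every DG $A$-module $M$, so by Proposition~\ref{ghchar}(4) the homology functor is an equivalence $D(A)\to\mathcal{P}$ which carries the triangulation across. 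The octahedral axiom and all the sign bookkeeping are inherited from $D(A)$ for free; nothing case-by-case is ever verified. The parity hypothesis enters at a much more elementary and fully checkable point: it is precisely the condition under which the Leibniz rule makes $da=u^2$, $du=0$ a well-defined differential on $A$ (when $i$ and $n$ have opposite parity, $v$ has odd degree forcing $\operatorname{char}k=2$; when both are odd the signs cancel; when both are even and $\operatorname{char}k\neq 2$ the relation $d(au+ua+v)=0$ fails).

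By contrast, your plan leaves the hard part undone. You define the class of distinguished triangles and reduce (TR1) to a graded Smith normal form, but you do not verify (TR4) --- you ``grant that the sign bookkeeping goes through,'' and you hypothesize that the parity condition is ``forced'' there without demonstrating it. That is the entire content of the proposition for the exterior algebra case; a by-hand triangulation stands or falls on the octahedron, and in the ungraded $T/(4)$ case of \cite{muro} that verification is genuinely delicate. Moreover, the statement concerns $\mathcal{P}$, all projective modules, not just $\mathcal{P}_f$: your Smith-normal-form reduction (diagonalize a matrix over $k$, lift along the nilpotent surjection $R\to k$) is stated only for finite matrices, and you give no argument that an arbitrary map of possibly infinitely generated free $k[x]/(x^2)$-modules decomposes into the elementary pieces you need for (TR1). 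The paper sidesteps both issues at once via $D(A)$, which is cocomplete and already triangulated. The graded-field half of your argument (the split triangulation) is fine and standard, though again the paper's route through $D(k)$ is shorter.
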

\begin{proof} We will use a graded version of the construction presented in \cite{muro}.  Fix $n\geq 0$.  We will work in the category of differential graded modules over a differential graded algebra $A$, where the degree of all derivations is $-n$.  So if $x,y \in A$,
\[ d(xy) = d(x)y + (-1)^{n|x|}xd(y).\]
This category admits a triangulation with suspension functor $(-)[n]$.  For example, if $u \in A$ has degree $i$, then there is an exact triangle
\[\xymatrix{A[i] \ar[r]^-u & A \ar[r] & A \oplus A[i+n] \ar[r] & A[i+n],}\]
where the differential on $A[j]$ is $(-1)^jd$ and the differential $D$ on $A \oplus A[i+n]$ is
\[ D(a,b) = (da + ub, (-1)^{i+n}db).\]

Let $A = k$ with zero differential.  Trivially, the homology of any differential graded $A$-module is projective, so homology induces an equivalence of categories from $D(A)$ (the derived category of $A$) to $\mathcal{P} = \modcat{k}$ by Proposition \ref{ghchar} (4).

For the exterior algebra $k[x]/(x^2)$ with a unit $v$ in degree $3|x| + n$, we use the differential graded algebra constructed in \cite{muro}.  Let $i = |x|$, and let $a$ and $u$ be symbols with degrees $|a| = 2i+n$ and $|u| = i$.  Let $A = k\langle a, u \rangle/I$, where $I$ is the two sided ideal generated by the homogeneous elements $a^2$ and $au + ua + v$ (here we see why the existence of the unit $v$ is necessary).  Define the differential on $A$ by $da = u^2$ and $du = 0$.  One can check that this differential is well-defined; if $i$ and $n$ have opposite parity, then $v$ has odd degree, and so $\chr k = 2$ by graded commutativity and signs do not matter.  If $i$ and $n$ are both odd, then the signs work out independent of the characteristic of $k$.  The differential is not well-defined if $i$ and $n$ are both even and $\chr k \neq 2$; fortunately, if $n = 0$, $\chr k = 2$ is forced (see Proposition \ref{localcase}).

As in \cite{muro}, it is straightforward to check that $H_* A \cong k[x]/(x^2)$, where $x$ is the homology class of $u$, and for any differential graded $A$-module $M$, $H_* M$ is projective.  Again we see that homology induces an equivalence of categories from $D(A)$ to $\mathcal{P}$.
\end{proof}

Observe that, for any integer $n$, every $\Delta^n$-ring is a $\Delta^n_f$-ring.  For if $\mathcal{P}$ admits a triangulation, then $\thicksub{R}$ admits a triangulation.  Since $R$ must be Noetherian, $\thicksub{R} = \mathcal{P}_f$.  Propositions \ref{t4} and \ref{dga} now imply

\begin{prop} Let $n \in \{0,1\}$.  Every ring $R$ in the conclusion of Proposition \ref{localcase} is a $\Delta^n$-ring.  Hence, the classes of graded commutative local $\Delta^n$-rings and graded commutative local $\Delta^n_f$-rings coincide.\label{localchar}\end{prop}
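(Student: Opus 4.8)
The plan is to read off the proposition directly from Propositions \ref{localcase}, \ref{t4}, and \ref{dga}: go through the three types of local ring listed in the conclusion of Proposition \ref{localcase} one at a time, exhibit a triangulation of $\mathcal{P}$ in each case, and then deduce the last sentence by a formal argument.

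First I would fix $n\in\{0,1\}$ and let $R$ be one of the rings from Proposition \ref{localcase}. If $R$ is a graded field, Proposition \ref{dga} immediately gives that $R$ is a $\Delta^n$-ring. If $R=k[x]/(x^2)$ is an exterior algebra with a unit in degree $3|x|+n$, I want to invoke Proposition \ref{dga}, whose only hypothesis is that $n$ and $|x|$ are not both even when $\chr k\neq 2$: when $n=1$ this holds because $n$ is odd, and when $n=0$ Proposition \ref{localcase} forces $\chr k=2$, so the proviso is vacuous. Hence $R$ is again a $\Delta^n$-ring. Finally, the ring $T/(4)$ occurs in Proposition \ref{localcase} only when $n$ is even, so under our hypothesis $n=0$, and Proposition \ref{t4} says $T/(4)$ is a $\Delta^0$-ring. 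This covers every case in the conclusion of Proposition \ref{localcase}, proving the first assertion.

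For the second assertion, I would use the observation recorded just before the proposition that every $\Delta^n$-ring is a $\Delta^n_f$-ring; in particular every graded commutative local $\Delta^n$-ring is a graded commutative local $\Delta^n_f$-ring. Conversely, if $R$ is a graded commutative local $\Delta^n_f$-ring, then $\mathcal{P}_f$ admits a triangulation with suspension $(-)[n]$, so by Proposition \ref{localcase} $R$ is one of the three listed rings, and by the part just proved $R$ is a $\Delta^n$-ring. Hence the two classes coincide.

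I do not expect a genuine obstacle here, since the statement is essentially an assembly of earlier results; the one point that needs care is checking that the parity proviso of Proposition \ref{dga} is automatically met for every ring produced by Proposition \ref{localcase}. This is precisely why the statement is restricted to $n\in\{0,1\}$: for larger even $n$ the differential graded algebra construction of Proposition \ref{dga} can fail (when $\chr k\neq 2$ and $|x|$ is even), a combination that Proposition \ref{localcase} does not exclude, and Proposition \ref{t4} is only available for $n=0$.
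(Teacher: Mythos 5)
Your proof is correct and takes exactly the paper's approach: the paper's own "proof" is just the sentence ``Propositions \ref{t4} and \ref{dga} now imply'' following the observation that every $\Delta^n$-ring is a $\Delta^n_f$-ring, and you have simply filled in the case-check and the verification that the parity proviso in Proposition \ref{dga} is met for $n\in\{0,1\}$. Your closing remark about why larger $n$ is excluded is also the right explanation.
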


According to \cite[15.27]{lam}, a commutative ring is quasi-Frobenius if and only if it is a finite product of local Artinian rings with simple socle.  The following proposition allows us to restrict our attention to the local case.  Combined with the above characterization of commutative local $\Delta^n$-rings for $n \in \{0,1\}$, this completes the proofs of Theorems \ref{mainthm1} and \ref{mainthm0}.

\begin{prop}  Suppose $R \cong A \times B$.  The category of projective $R$-modules admits a triangulation if and only if the categories of projective $A$-modules and projective $B$-modules each admit a triangulation.  The same is true for finitely generated projective modules.
\label{ringprod}
\end{prop}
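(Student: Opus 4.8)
The plan is to reduce the triangulation of $\mathcal{P}$ for $R \cong A \times B$ to the triangulations for $A$ and $B$ via the standard product decomposition of module categories. First I would recall that a projective $R$-module $P$ decomposes canonically as $P \cong Pe \times P(1-e)$, where $e = (1,0)$ is the central idempotent; this gives an equivalence of additive categories $\mathcal{P} \simeq \mathcal{P}_A \times \mathcal{P}_B$ (and likewise for finitely generated projectives, since finite generation is detected factorwise). The shift functor $(-)[n]$ respects this decomposition, so the equivalence is compatible with the proposed suspension on each side.

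For the ``if'' direction, given triangulations on $\mathcal{P}_A$ and $\mathcal{P}_B$, I would put on $\mathcal{P} \simeq \mathcal{P}_A \times \mathcal{P}_B$ the ``product'' triangulated structure: a triangle in $\mathcal{P}$ is declared exact exactly when its $A$-component and its $B$-component are each exact. Verifying the axioms (TR1--TR4, or the octahedral axiom in whichever formulation \cite{axiomatic} uses) is then a componentwise check: each axiom asserts the existence/fill-in of certain morphisms and triangles, and one simply takes the pair of the morphisms provided on each factor. The additive structure, the automorphism property of $\Sigma$, and the long exact sequence after applying $[R,-]_0$ all pass through products without difficulty. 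This direction is essentially formal.

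For the ``only if'' direction, suppose $\mathcal{P}$ is triangulated. The key point is that the idempotent $e$ yields a decomposition of the identity object and, more importantly, that the full subcategories $\mathcal{P}_A$ and $\mathcal{P}_B$ (viewed inside $\mathcal{P}$ via the embeddings $M \mapsto M \times 0$ and $N \mapsto 0 \times N$) are closed under the operations defining triangles. Concretely, I would argue that any exact triangle in $\mathcal{P}$ with two vertices in $\mathcal{P}_A$ has its third vertex in $\mathcal{P}_A$ as well: apply the functor $(-) \otimes_R B$ (equivalently, multiply by $1-e$), which is exact on projectives and kills the two $A$-vertices, so the cofiber's $B$-component is a projective $B$-module that sits in an exact triangle with two zero vertices, hence is zero by the long exact sequence coming from $[R,-]_0$. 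Thus $\mathcal{P}_A$ is a triangulated subcategory with the induced structure; the same holds for $\mathcal{P}_B$. Since every object of $\mathcal{P}$ is (canonically) a biproduct of one from each, and biproducts of exact triangles are exact, the triangulation on $\mathcal{P}$ is the product of those induced on $\mathcal{P}_A$ and $\mathcal{P}_B$.

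The main obstacle — really the only one requiring care — is verifying that the class of exact triangles on $\mathcal{P}_A$ obtained by restriction genuinely satisfies the octahedral axiom, rather than just TR1--TR3. The worry is that the octahedron filling a pair of composable morphisms in $\mathcal{P}_A$, produced inside $\mathcal{P}$, might a priori involve objects outside $\mathcal{P}_A$; but the closure argument above (the cofiber of a morphism between objects of $\mathcal{P}_A$ lies in $\mathcal{P}_A$) shows every vertex in the octahedron stays in $\mathcal{P}_A$, so the axiom is inherited. Once that is in place, everything else is bookkeeping with the additive equivalence $\mathcal{P} \simeq \mathcal{P}_A \times \mathcal{P}_B$, and the finitely generated case is identical since the equivalence and all the functors used ($\otimes_R A$, $\otimes_R B$) preserve finite generation.
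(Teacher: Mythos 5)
Your proof is correct and takes essentially the same approach as the paper's, which simply cites the equivalence $\modcat{R} \cong \modcat{A} \times \modcat{B}$ and leaves the bookkeeping to the reader. You have filled in the details the paper omits: that the product of two triangulated categories is triangulated componentwise, and that conversely the factor subcategories are closed under cones (via the long exact sequence obtained by applying $[R,-]_0$) and so inherit triangulations, with $\Sigma = (-)[n]$ automatically preserving the factor decomposition.
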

\begin{proof}  This is a consequence of the fact that $\modcat{R} \cong \modcat{A} \times \modcat{B}$.
\end{proof}

As a final note, we broaden the scope of the second statement of Proposition \ref{localchar}.

\begin{prop} Let $n \in \{0,1\}$.  The classes of graded commutative Noetherian $\Delta^n$-rings and graded commutative Noetherian $\Delta^n_f$-rings coincide. \label{noethchar}\end{prop}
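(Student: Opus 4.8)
The plan is to show that, once $R$ is Noetherian, being a $\Delta^n_f$-ring and being a $\Delta^n$-ring are equivalent because both force $R$ to be quasi-Frobenius of the type classified in Proposition \ref{localcase}. One inclusion is immediate: every $\Delta^n$-ring is a $\Delta^n_f$-ring (noted before Proposition \ref{localchar}), and by Proposition \ref{timpqf} every $\Delta^n$-ring is quasi-Frobenius, hence Noetherian, so every graded commutative Noetherian $\Delta^n$-ring is in particular a graded commutative Noetherian $\Delta^n_f$-ring.

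For the converse I would first record that the proofs of Theorems \ref{mainthm1} and \ref{mainthm0} actually establish a bit more than stated: if $R$ is a graded commutative \emph{quasi-Frobenius} $\Delta^n_f$-ring with $n \in \{0,1\}$, then, decomposing $R \cong R_1 \times \cdots \times R_m$ into graded-local factors as in \cite[15.27]{lam}, each $R_i$ is a local $\Delta^n_f$-ring by the finitely generated version of Proposition \ref{ringprod}, hence a local $\Delta^n$-ring by Proposition \ref{localchar}, and hence $R$ is a $\Delta^n$-ring by Proposition \ref{ringprod}. So it is enough to prove that a graded commutative Noetherian $\Delta^n_f$-ring $R$ is quasi-Frobenius. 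By Proposition \ref{timpif}, such an $R$ is an IF-ring, so the remaining point is the purely ring-theoretic statement that a Noetherian IF-ring is quasi-Frobenius: an IF-ring is self-FP-injective (part of the usual characterization of IF-rings, \cite[\S 6]{faith}), and over a Noetherian ring an FP-injective module is injective by Baer's criterion (finitely generated equals finitely presented), so $R$ is Noetherian and self-injective, i.e., quasi-Frobenius.

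The one nontrivial ingredient here is the implication ``Noetherian $+$ IF $\Rightarrow$ quasi-Frobenius''; everything else is bookkeeping with Propositions \ref{timpif}, \ref{ringprod}, and \ref{localchar}, carried out exactly as in the quasi-Frobenius case. A minor caveat worth checking is that the argument relies on the structure theory of graded commutative quasi-Frobenius rings (a finite product of graded-local factors with simple socle); but since this is already invoked, for $\Delta^n$-rings, in the proofs of Theorems \ref{mainthm1} and \ref{mainthm0}, it introduces nothing new.
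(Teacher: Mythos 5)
Your proposal is correct and follows essentially the same path as the paper's proof: use Proposition \ref{timpif} to see that a Noetherian $\Delta^n_f$-ring is an IF-ring, upgrade this to quasi-Frobenius, decompose into local factors, and apply Proposition \ref{localchar} factor-by-factor together with Proposition \ref{ringprod}. The one place you diverge is that the paper simply cites \cite[6.9]{faith} for the implication ``Noetherian $+$ IF $\Rightarrow$ self-injective,'' whereas you supply the underlying argument directly (IF rings are self-FP-injective, and over a Noetherian ring Baer's criterion makes FP-injectivity into genuine injectivity because ideals are finitely presented); this is a correct and slightly more self-contained way of landing on the same reference's conclusion. Your caveat about the graded version of the structure theorem for commutative quasi-Frobenius rings is well-taken and in fact flags a detail the paper passes over silently, though as you note it is already implicitly relied upon in the proofs of Theorems \ref{mainthm1} and \ref{mainthm0}.
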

\begin{proof}  It suffices to check that every commutative Noetherian $\Delta^n_f$-ring $R$ is a $\Delta^n$-ring.  Since $R$ must be an IF-ring (Proposition \ref{timpif}), it must be a commutative Noetherian self-injective ring (\cite[6.9]{faith}).  This makes $R$ quasi-Frobenius by definition, and therefore a product of local rings, each of which must be a $\Delta^n$-ring by Proposition \ref{localchar} (since each local ring is a $\Delta^n_f$-ring).  Hence, $R$ is a $\Delta^n$-ring.
\end{proof}

\section{The generating hypothesis}

We now give an application of the above classification to the global generating hypothesis.  First, we make a general observation.  A triangulated category $\mathcal{T}$ is {\em cocomplete} if arbitrary coproducts exist in $\mathcal{T}$.  We call a $\pi_* S$-module $M$ {\em realizable} if there is an object $X \in \mathcal{T}$ such that $\pi_*X = M$.  If $\mathcal{T}$ is cocomplete, then idempotents split in $\mathcal{T}$ (\cite{bockstedt}), so every projective $\pi_* S$-module is realizable as $\pi_* Y$, where $Y \in \langle S \rangle^0$, the collection of retracts of coproducts of suspensions of $S$.  Hence, $\pi_*$ induces an equivalence of categories
\[\map{\Phi}{\langle S \rangle^0}{\mathcal{P}}\]
(recall that $\mathcal{P}$ is the category of projective $\pi_* S$-modules).  Write $\loc{S}$ for the {\em localizing subcategory generated by $S$} (the smallest full subcategory of $\mathcal{T}$ containing $S$ that is thick and closed under arbitrary coproducts).  If $\mathcal{T} = \loc{S}$, then we say that $S$ {\em generates} $\mathcal{T}$.  We have the following equivalent forms of the global generating hypothesis; for this proposition, we do not require $\pi_* S$ to be graded commutative.

\begin{prop}  Let $\mathcal{T}$ be a cocomplete triangulated category.  The following are equivalent: \begin{enumerate}
\item $\mathcal{T}$ satisfies the global generating hypothesis.
\item $\mathcal{T} = \langle S \rangle^0$, the collection of retracts of arbitrary coproducts of suspensions of $S$.
\item The functor $\pi_*(-)$ is full and faithful.
\end{enumerate}
If $S$ is a compact generator of $\mathcal{T}$, then these conditions are equivalent to \begin{enumerate}
\item[(4)] Every realizable module is projective.
\end{enumerate}
If any one of the above conditions is satisfied, then the category $\mathcal{P}$ of projective right $\pi_* S$-modules admits a triangulation with shift functor $\Sigma = (-)[1]$.
\label{ghchar}
\end{prop}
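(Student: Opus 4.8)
The plan is to prove the cycle of implications $(1)\Leftrightarrow(3)$, $(2)\Rightarrow(3)$, $(3)\Rightarrow(2)$, then handle the compact-generator case $(2)/(3)\Leftrightarrow(4)$, and finally deduce the triangulation of $\mathcal{P}$ from $(2)$. The equivalence of $(1)$ and $(3)$ is the easiest part: $(1)$ says $\pi_*$ is faithful, $(3)$ says it is full and faithful, so $(3)\Rightarrow(1)$ is trivial, and for $(1)\Rightarrow(3)$ I would argue that faithfulness of $\pi_*$ forces fullness. Given $\alpha\colon \pi_* X \to \pi_* Y$, one wants to realize $\alpha$ as $\pi_* f$; the standard move is to first treat the case $X\in\langle S\rangle^0$, where $\pi_*$ restricted to $\langle S\rangle^0$ is already an equivalence onto $\mathcal{P}$ via $\Phi$, so any module map out of a projective is realized, and then bootstrap to general $X$ using a presentation of $X$ by objects of $\langle S\rangle^0$ together with faithfulness to kill the ambiguity — but in fact the cleanest route is to first establish $(1)\Rightarrow(2)$ and read fullness off $(2)$.

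The heart of the argument is $(1)\Rightarrow(2)$ (equivalently, producing $(2)$ from the GH). Given $X\in\mathcal{T}$, choose a map $g\colon P_0 \to X$ with $P_0\in\langle S\rangle^0$ inducing a surjection on $\pi_*$ (take a coproduct of suspensions of $S$, one for each generator of the $\pi_* S$-module $\pi_* X$). Complete $g$ to an exact triangle $P_1' \to P_0 \to X \to \Sigma P_1'$. Now $\pi_* P_0 \to \pi_* X$ is surjective, so from the long exact sequence $\pi_* P_1'$ surjects onto $\ker(\pi_* P_0 \to \pi_* X)$, and since $\pi_* S$ is (at this point, by the earlier sections — or one argues directly) such that this kernel is projective, one can choose $P_1\in\langle S\rangle^0$ with a map $P_1 \to P_1'$ surjective on $\pi_*$; composing gives $P_1 \to P_0$ whose cofiber $Y$ satisfies $\pi_* Y \cong \pi_* X$. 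The map $X \to \Sigma P_1'$ and the octahedral axiom produce a comparison $Y \to X$ (or $X\to Y$) inducing an isomorphism on $\pi_*$; by faithfulness (applied to the cone, whose homotopy groups vanish, hence whose identity is null, hence which is zero) this comparison is an isomorphism, so $X\in\langle S\rangle^0$. The direction $(2)\Rightarrow(3)$ is then immediate since $\pi_*|_{\langle S\rangle^0}=\Phi$ is an equivalence onto $\mathcal{P}$, and $(3)\Rightarrow(1)$ is trivial; $(2)\Rightarrow(1)$ also re-proves faithfulness directly.

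For the compact-generator addendum, $(2)\Rightarrow(4)$ is clear. For $(4)\Rightarrow(2)$: if $S$ is a compact generator and every realizable module is projective, take any $X$, build the triangle $P_1 \to P_0 \to X$ as above so that $\pi_* X$ is the cokernel of a map of projectives; but $\pi_* X$ is realizable hence projective, so the surjection $\pi_* P_0 \twoheadrightarrow \pi_* X$ splits, giving a retraction realizing $X$ as a retract of $P_0$ (here one uses that a splitting of $\pi_*$-surjection lifts because $P_0\in\langle S\rangle^0$ and $\pi_*$ is an equivalence there, plus compactness of $S$ to identify the relevant Hom-groups and Brown representability / the standard generation argument to conclude $X$ is actually a retract of the $\langle S\rangle^0$-object, not merely $\pi_*$-equivalent to it). Finally, to get the triangulation of $\mathcal{P}$: under $(2)$ the functor $\Phi\colon\langle S\rangle^0 \to \mathcal{P}$ is an equivalence, and $\langle S\rangle^0 = \mathcal{T}$ is triangulated with suspension $\Sigma$; transporting this triangulated structure across $\Phi$ and noting $\Phi\circ\Sigma = (-)[1]\circ\Phi$ (since $\pi_{k}(\Sigma X) = \pi_{k-1}(X)$) gives the triangulation of $\mathcal{P}$ with shift $(-)[1]$.

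The main obstacle I anticipate is the step inside $(1)\Rightarrow(2)$ where one must know that $\ker(\pi_* P_0 \to \pi_* X)$ is projective over $\pi_* S$ — without this one cannot choose $P_1\in\langle S\rangle^0$ mapping onto the syzygy. In the present context this is exactly the kind of structural fact the paper is building toward, so I would be careful to isolate what is genuinely needed: it suffices that submodules of projectives appearing as such syzygies are projective, which is where the hypothesis that $\pi_* S$ is already constrained (or a direct triangulated argument splitting the relevant short exact sequence using the GH itself) must be invoked. The other delicate point is the "isomorphism from $\pi_*$-isomorphism" inference: this requires faithfulness applied to a cone with vanishing homotopy, which is fine, but one must make sure the comparison map exists — this is a diagram chase with the octahedral axiom that I would want to write out carefully rather than wave at.
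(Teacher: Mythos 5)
Your overall scaffolding---proving $(1)\Rightarrow(2)\Rightarrow(3)\Rightarrow(1)$, handling $(4)$ separately under the compact-generator hypothesis, and transporting the triangulated structure across $\Phi$ at the end---matches the paper. However, your argument for $(1)\Rightarrow(2)$ has a genuine gap that you yourself flag: you build a two-step resolution $P_1\to P_0\to X$ and need the syzygy $\ker(\pi_* P_0 \to \pi_* X)$ to be projective in order to choose $P_1 \in \langle S\rangle^0$. That projectivity is not available to you at this stage; it is a \emph{consequence} of the global generating hypothesis (it is essentially Remark \ref{brownrem}), not a hypothesis you can invoke midway. As written, the argument is circular, and the auxiliary machinery (octahedron, comparison map, cone-with-vanishing-homotopy) is trying to paper over a step that simply isn't there.

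The repair is to drop the resolution entirely. Having chosen $f\colon P_0\to X$ with $P_0\in\langle S\rangle^0$ and $\pi_* f$ surjective, complete it to an exact triangle $P_0\xrightarrow{f} X \xrightarrow{g} Z \to \Sigma P_0$. Surjectivity of $\pi_* f$ and the long exact sequence force $\pi_* g = 0$; by the global generating hypothesis $g = 0$; and a triangle whose second map vanishes is split, so $f$ is a split epimorphism and $X$ is a retract of $P_0\in\langle S\rangle^0$. No syzygy, no octahedron, no comparison map---this is the paper's argument, and it is a one-liner. Your $(4)\Rightarrow(2)$ suffers from the same instinct to resolve: rather than trying to lift a splitting of $\pi_* P_0 \twoheadrightarrow \pi_* X$ (which you cannot do without already knowing $X\in\langle S\rangle^0$), note instead that $\pi_* X$ projective gives some $Y\in\langle S\rangle^0$ with $\pi_* Y\cong\pi_* X$; the isomorphism lifts to a map $Y\to X$ because $[Y,X]_0 \cong \Hom_0(\pi_* Y,\pi_* X)$ for $Y\in\langle S\rangle^0$ when $S$ is compact, and this map is an equivalence because $S$ is a compact generator, so $\pi_*$ detects isomorphisms on $\loc{S}$.
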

\begin{proof} 
\n $(1) \implies (2)$. Assume $\mathcal{T}$ satisfies the global generating hypothesis.  Let $Y$ be an object of $\mathcal{T}$.  Using a generating set for the $\pi_*S$-module $\pi_*Y$, construct a map $\map{f}{X}{Y}$ such that $X \in \langle S \rangle^0$ and $\pi_* f$ is surjective.  This map fits into an exact triangle
\[\xymatrix{X \ar[r]^-f & Y \ar[r]^-g & Z \ar[r] & \Sigma X.}\]
Since $\pi_* f$ is surjective, $\pi_* g = 0$.  By the global generating hypothesis, $g = 0$, forcing $Y$ to be a retract of $X$.  Hence, $\mathcal{T} = \langle S \rangle^0$.

\n $(2) \implies (3)$.  This implication is trivial, since $\Phi$ is an equivalence of categories.

\n $(3) \implies (1)$.  This implication is true by definition of the global generating hypothesis.

\n $(2) \implies (4)$.  This implication is trivial.

\n $(4) \implies (2)$.  Assume $S$ is compact.  For any $X \in \loc{S}$, $\pi_* X = 0$ if and only if $X$ is trivial.  Consequently, for any map $f$ in $\loc{S}$, $\pi_* f$ is an isomorphism if and only if $f$ is an equivalence.  Now fix $X \in \loc{S}$.  Since $\pi_* X$ is projective, there is a map $\map{f}{Y}{X}$ such that $Y \in \langle S \rangle^0 \subseteq \loc{S}$ and $\pi_* f$ is an isomorphism.  Hence, $X$ is equivalent to $Y$.  Since $\mathcal{T} = \loc{S}$, the implication is established.

If any of these conditions hold, then $\Phi$ may be used to endow $\mathcal{P}$ with a triangulation.  In this triangulation, $\Sigma = (-)[1]$ since $\pi_* \Sigma X = \pi_{*-1}X = \pi_* X [1]$.
\end{proof}

\begin{remark}  Suppose $\mathcal{T}$ satisfies Brown Representability (i.e., cohomological functors are representable); for example, this holds in any cocomplete, compactly generated triangulated category.  If $I$ is an injective $\pi_* S$-module, then the functor $\Hom_0(\pi_*(-),I)$ is a cohomological functor on $\mathcal{T}$, hence representable by an object $E(I) \in \mathcal{T}$.  In this situation, $\pi_* E(I) = I$, so every injective module is realizable.  If every realizable module is projective, then every injective module is projective.  Similarly, if every realizable module is injective, then every projective module is injective.  In either case, $\pi_* S$ is quasi-Frobenius, and the injective, projective, and realizable modules coincide.  Hence, if $\mathcal{T}$ satisfies Brown Representability, then it satisfies the strong generating hypothesis if and only if the realizable modules and the projective modules coincide, if and only if the realizable modules and the injective modules coincide. \label{brownrem}\end{remark}

In the graded commutative case, then, we see that if $\mathcal{T}$ satisfies the global generating hypothesis, then $\pi_* S$ must be a product of fields and exterior algebras on one generator by Theorem \ref{mainthm1}.  However, the converse of this statement if false; the triangulation on $\mathcal{P}$ may not be induced by $\pi_*$.  For example, consider the stable module category $\StMod(k[G])$ associated to a finite $p$-group $G$ and field $k$ of characteristic $p$.  In $\StMod(k[G])$, $\pi_* S = \hat{H}(G;k)$, the Tate cohomology of $G$.  For $n\geq 1$, $\hat{H}(\intmod{3^n}; \mathbb{F}_3) \cong \mathbb{F}_3[y,y^{-1}][x]/(x^2)$, where $|x| = 1$ and $|y| = 2$.  However, it is shown in \cite{sunil} that $\StMod(\mathbb{F}_3[\intmod{3}])$ satisfies the global generating hypothesis, though $\StMod(\mathbb{F}_3[\intmod{3^n}])$ does not for $n \geq 2$.  The relevant condition is included in Corollary \ref{gghcor}, which we now prove.

\begin{proof}[Proof of Corollary \ref{gghcor}]
First, note that every stable homotopy category has a symmetric monoidal structure (for which $S$ is the unit) compatible with the triangulation.  This forces $R = \pi_* S$ to be graded commutative (\cite[A.2.1]{axiomatic}).  Further, we may use the monoidal product to take any element $x \in \pi_* S$ and obtain a map $x \cdot X$ for any $X \in \mathcal{T}$.  For example, if $e \in \pi_* S$ is idempotent, then $e \cdot X$ is an idempotent endomorphism of $X$.  Since idempotents split, there is a decomposition $X \simeq Y \wdg Z$ such that $\pi_*Y  \cong e\pi_* X$ and $\pi_* Z = (1-e)\pi_* X$ (see \cite[1.4.8]{axiomatic}).

Suppose $\mathcal{T}$ satisfies the global generating hypothesis.  By Proposition \ref{ghchar}, the category $\mathcal{P}$ of projective modules over $\pi_* S$ admits a triangulation with suspension functor $(-)[1]$.  By Theorem \ref{mainthm1}, condition (1) is satisfied, and $\pi_* S \cong R_1 \times \cdots \times R_n$, where $R_i$ is either a graded field $k$ or an exterior algebra $k[x]/(x^2)$.  Since idempotents split in $\mathcal{T}$, $S \cong A_1 \wdg \cdots \wdg A_n$, where $\pi_* A_i = R_i$.  The cofiber $C_i$ of $x\cdot A_i$ is a summand of $C$, the cofiber of $x \cdot S$.  Since $\pi_* C_i$ must be a nontrivial free $R_i$-module, $x \cdot C_i \neq 0$.  This proves (2).

Conversely, if condition (1) holds, then $R$ and $S$ admit decompositions as above.  We now show that it suffices to assume $R$ is local.  Let $\mathcal{T}_i = \loc{A_i}$; $\loc{A_i}$ is an ideal by \cite[1.4.6]{axiomatic}.  Every $X \in \mathcal{T}$ admits a decomposition $X \cong X_1 \wdg \cdots \wdg X_n$, where $X_i \in \mathcal{T}_i$ (take $X_i = A_i \smsh X$).  We claim that this decomposition is orthogonal, in that $\Hom_{\mathcal{T}}(\mathcal{T}_i, \mathcal{T}_j) = 0$ whenever $i \neq j$ (i.e., every map from an object in $\mathcal{T}_i$ to an object in $\mathcal{T}_j$ is trivial).  First, observe that if $i \neq j$, then any map $\map{f}{A_i}{A_j}$ is trivial, as follows.  It suffices to show that the induced map $\map{f_*}{R_i}{R_j}$ is zero.  Let $e_k \in \pi_* S$ be the idempotent corresponding to $R_k$.  We now have $f_* (x) = f_* (e_i x) = e_i f_* (x) = 0$, for all $x \in R_i$.  So indeed, $[A_i, A_j]_* = 0$.  Now, $[A_i, -]_*$ vanishes on $\loc{A_j}$ since $A_i$ is compact; therefore $[-,X]_*$ vanishes on $\loc{A_i}$ for any $X \in \loc{A_j}$.  In summary, there is an orthogonal decomposition
\[\mathcal{T} = \loc{S} \cong \mathcal{T}_1 \wdg \cdots \wdg \mathcal{T}_n. \]
We now see that $\pi_*$ is faithful on $\mathcal{T}$ if and only if the functors $[A_i,-]_*$ are faithful on $\mathcal{T}_i$ for $i = 1, \dots, n$.  It therefore suffices to assume that $R$ is a local ring.

We must now show that if $R = \pi_* S$ is local and satisfies conditions (1) and (2), then $\pi_*$ is faithful.  By Proposition \ref{ghchar} (4), we need only check that $\pi_* X$ is always projective (or free, since $R$ is local).  If $R$ is a graded field, then this is trivially true.  If $R \cong k[x]/(x^2)$, then condition (2) tells us that $\pi_* C$ is free, where $C$ is the cofiber of $x\cdot S$. Since every map of free $k[x]/(x^2)$-modules is the coproduct of a trivial map, an isomorphism, and multiplication by $x$, it is easy to check that $\pi_* X$ is free for all $X \in \thicksub{S}$.  For arbitrary $X\in \mathcal{T}$, $\pi_* X$ is the direct limit of a system of modules of the form $\pi_* X_\alpha$, where $X_\alpha \in \thicksub{S}$ (\cite[2.3.11]{axiomatic}). Since any direct limit of flat modules is flat, $\pi_* X$ is flat.  Over $k[x]/(x^2)$, flat implies free.  This completes the proof.

\end{proof}

\begin{remark}  Let $\mathcal{T}$ be the category of projective modules over $\intmod{4}$ endowed with the triangulation associated to $\Sigma = {\bf 1}$, and let $S = \intmod{4}$.  It is, of course, the case that $\pi_*$ acts faithfully in this situation.  However, the ring $\pi_* S = \bigoplus_{i\in \ints}\intmod{4}$ is not graded commutative, so this does not contradict Corollary \ref{gghcor}.\end{remark}

Finally, we study the strong generating hypothesis.  Write $\langle S \rangle_f^0$ for the collection of retracts of finite coproducts of suspensions of $S$.  As above, $\pi_*$ induces an equivalence of categories
\[\map{\Phi_f}{\langle S \rangle_f^0}{\mathcal{P}_f}.\]
We have the following equivalent forms of the strong generating hypothesis.

\begin{prop}  Let $\mathcal{T}$ be a cocomplete triangulated category, and assume $S$ is compact.  The following are equivalent: \begin{enumerate}
\item $\mathcal{T}$ satisfies the strong generating hypothesis.
\item $\thicksub{S} = \langle S \rangle^f_0$, the collection of retracts of finite coproducts of suspensions of $S$.
\item For all $X \in \thicksub{S}$, $\pi_* X$ is projective.
\end{enumerate}
If any one of the above conditions is satisfied, then the category $\mathcal{P}_f$ of finitely generated projective right $\pi_* S$-modules admits a triangulation with suspension functor $\Sigma = (-)[1]$.
\label{sghchar}
\end{prop}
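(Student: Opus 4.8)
The plan is to follow the template of the proof of Proposition~\ref{ghchar}, with $\thicksub{S}$ playing the role of $\mathcal{T}$ and $\langle S\rangle_f^0$ the role of $\langle S\rangle^0$; the one genuinely new ingredient is that compactness of $S$ lets one replace an arbitrary coproduct of suspensions of $S$ by a finite one. I would assume $S\neq 0$ (the case $S=0$ being vacuous) and use freely three standard facts: $\thicksub{S}$ consists of compact objects (the compact objects form a thick subcategory containing $S$); $\pi_*(-)$ commutes with coproducts; and, since $S$ is a compact generator of $\loc{S}$, an object $Z\in\loc{S}$ with $\pi_*Z=0$ must be zero (\cite{neeman}). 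I would also record the elementary fact that a map out of a coproduct $\coprod_i\Sigma^{n_i}S$ is precisely a tuple of elements of the homotopy of the target, so every $\pi_*S$-module map out of a free module, and hence (by splitting off a retract) out of any projective module realized by an object of $\langle S\rangle^0$, is realized by a map of objects.

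I would then prove $(1)\Rightarrow(2)\Rightarrow(3)\Rightarrow(1)$. For $(1)\Rightarrow(2)$: given $X\in\thicksub{S}$, choose one suspension of $S$ for each element of $\pi_*X$ to build $g\colon W\to X$ with $W$ a (possibly infinite) wedge of suspensions of $S$ and $\pi_*g$ surjective, and complete to an exact triangle $W\to X\to Z\to\Sigma W$. The long exact sequence gives that the map $X\to Z$ is zero on $\pi_*$, and since $X\in\thicksub{S}$ the strong generating hypothesis forces $X\to Z$ to vanish; hence the triangle splits and $X$ is a retract of $W$. Because $X$ is compact, its section into $W$ factors through a finite subwedge, and composing with the retraction exhibits $X$ as a retract of a finite wedge of suspensions of $S$, so $X\in\langle S\rangle_f^0$. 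Since $\langle S\rangle_f^0\subseteq\thicksub{S}$ always, this gives $(2)$. The implication $(2)\Rightarrow(3)$ is then immediate: $\Phi_f$ identifies $\thicksub{S}=\langle S\rangle_f^0$ with $\mathcal{P}_f$, and every module in $\mathcal{P}_f$ is projective.

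For $(3)\Rightarrow(1)$: let $f\colon X\to Y$ with $X\in\thicksub{S}$ and $\pi_*f=0$. By $(3)$, $\pi_*X$ is projective, so there is $W\in\langle S\rangle^0$ with $\pi_*W\cong\pi_*X$, and by the observation above this isomorphism is realized by some $g\colon W\to X$ with $\pi_*g$ an isomorphism. The cofiber of $g$ lies in $\loc{S}$ and has vanishing homotopy, hence is zero, so $g$ is an equivalence. Now $fg\colon W\to Y$ has all components zero, since the component of $fg$ along a wedge summand $\Sigma^{n_i}S$ is $\pi_{n_i}(f)$ applied to an element of $\pi_{n_i}X$; therefore $fg=0$ and so $f=0$. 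This closes the cycle. Finally, under $(2)$ the thick subcategory $\thicksub{S}$ is a triangulated category and $\Phi_f$ makes it equivalent to $\mathcal{P}_f$; transporting the triangulation and noting that $\Phi_f(\Sigma X)=\pi_{*-1}X=(\Phi_fX)[1]$ shows the resulting suspension on $\mathcal{P}_f$ is $(-)[1]$, exactly as at the end of the proof of Proposition~\ref{ghchar}.

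I expect the step needing the most care to be the compactness argument in $(1)\Rightarrow(2)$: the generating-hypothesis-style argument naturally produces $X$ only as a retract of an \emph{arbitrary} wedge of suspensions of $S$, and one must invoke compactness of the objects of $\thicksub{S}$ to cut this down to a finite wedge. A secondary point is the ``freeness'' of maps out of wedges used to realize module maps between homotopy groups, together with the detection of zero objects in $\loc{S}$ by $\pi_*$; these are the inputs that let the argument avoid a separate finiteness analysis of $\pi_*X$.
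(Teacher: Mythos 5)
Your proof is correct and follows essentially the same approach as the paper: prove that the strong generating hypothesis forces each $X\in\thicksub{S}$ to be a retract of a wedge of suspensions of $S$, use compactness of $X$ to cut the wedge down to a finite one, and identify $\mathcal{P}_f$ with $\langle S\rangle_f^0$ via $\Phi_f$ to transport the triangulation. The only cosmetic difference is that you close the cycle via a direct $(3)\Rightarrow(1)$ argument (realize a projective $\pi_*X$ by some $W'$, show $g\colon W'\to X$ is an equivalence, then $fg=0$), whereas the paper notes $(2)\Rightarrow(1)$ is immediate and proves $(2)\Leftrightarrow(3)$ by mirroring the corresponding step of Proposition~\ref{ghchar}; both routes use the same ingredients and are of comparable length.
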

\begin{proof} 
\n $(1) \iff (2)$.  Suppose $\mathcal{T}$ satisfies the strong generating hypothesis.  As in the proof of Proposition \ref{ghchar}, we see that any $X \in \thicksub{S}$ is a retract of a coproduct of suspensions of $S$.  Since $S$ is compact, so is $X$; it must therefore be a retract of a finite coproduct. Hence, (1) implies (2).  The converse is trivial.

\n $(2) \iff (3)$.  The proof of this equivalence is similar to the proof of its analogue in Proposition \ref{ghchar}.  Note that we do not need to assume $\mathcal{T} = \loc{S}$ since $\thicksub{S} \subseteq \loc{S}$.

If any of these equivalent conditions hold, then $\Phi_f$ may be used to endow $\mathcal{P}_f$ with a triangulation with suspension $\Sigma = (-)[1]$.
\end{proof}

\begin{remark} This remark is a companion to Remark \ref{brownrem}.  Assume $\mathcal{T}$ is a monogenic stable homotopy category and a Brown category (see \cite[4.1.4]{axiomatic}).  For all $X \in \mathcal{T}$, $\pi_* X$ is the direct limit over a system of modules of the form $\pi_* X_\alpha$, where $X_\alpha \in \thicksub{S}$.  Consequently, if $\mathcal{T}$ satisfies the strong generating hypothesis, then every realizable module is flat (\cite[4.4]{lam}), and every flat module is realizable (since $(-) \otimes_{\pi_*S} F$ is representable for any flat module $F$).  On the other hand, representability of cohomological functors implies that arbitrary products of $\pi_* S$ are realizable.  If every realizable module is flat, this forces $\pi_* S$ to be coherent (\cite[4.47]{lam}).  Now, for all $X \in \thicksub{S}$, $\pi_* X$ is a finitely presented flat module and therefore projective (\cite[4.30]{lam}).  In summary, if $\mathcal{T}$ is a monogenic stable homotopy category and Brown category, then it satisfies the strong generating hypothesis if and only if the flat modules and realizable modules coincide.  For example, in the derived category of a ring $R$, every $R$-module is realizable, and the strong generating hypothesis is true if and only if all modules are flat, if and only if $R$ is von Neumann regular (\cite{hlp}).\label{brownrem2}\end{remark}

We next show that, if either form of the generating hypothesis holds, then it also holds locally.  For any $\pi_* S$-module $M$, let $M_\idp$ denote the localization of $M$ at the prime ideal $\idp$.  Suppose $\mathcal{T}$ is a cocomplete triangulated category with compact generator $S$.  The proofs of \cite[2.3.17]{axiomatic} and \cite[3.3.7]{axiomatic} show that there exists a localization functor $L_\idp$ on $\mathcal{T}$ such that $\pi_* L_\idp X \cong (\pi_* X)_\idp$.  We mean `localization functor' in the sense of definition \cite[3.1.1]{axiomatic}, omitting the condition involving smash products, since $\mathcal{T}$ may not have a product structure.  In particular, the natural map $\xymatrix@1{Y \ar[r]^-{\iota_Y} & L_\idp Y}$ induces an isomorphism
\begin{equation} \xymatrix{[L_\idp Y, L_\idp X]_* \ar[r]^-\cong & [Y, L_\idp X]_*.}\label{natiso}\end{equation}
Call an object {\em $\idp$-local} if it lies in the image of $L_\idp$, and let $\mathcal{T}_\idp$ denote the full subcategory of $\idp$-local objects.  Since $S$ is compact, $L_\idp$ commutes with coproducts; hence, $\mathcal{T}_\idp = \loc{L_\idp S}$, and $L_\idp S$ is compact in $\mathcal{T}_\idp$.  

\begin{prop} Suppose $\mathcal{T}$ is a cocomplete triangulated category with compact generator $S$.  If $\mathcal{T}$ satisfies the strong or global generating hypothesis, then so does $\mathcal{T}_\idp$ for all prime ideals $\idp$ in $\pi_* S$ \label{global2local}\end{prop}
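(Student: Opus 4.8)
The plan is to treat the global and strong generating hypotheses separately, in each case transferring the relevant characterization (Propositions \ref{ghchar} and \ref{sghchar}) from $\mathcal{T}$ to $\mathcal{T}_\idp$ via the formal properties of $L_\idp$ recorded just above the statement. I would first fix notation: $\mathcal{T}_\idp = \loc{L_\idp S}$ is a cocomplete triangulated category, $L_\idp S$ is a compact generator of it, and $L_\idp$ is a triangulated functor that preserves suspensions and --- since $S$ is compact --- arbitrary coproducts, while any functor preserves retracts. Taking $Y = S$ in \eqref{natiso}, precomposition with $\iota_S \colon S \to L_\idp S$ gives a natural isomorphism $[L_\idp S, Z]_* \cong [S, Z]_* = \pi_* Z$ for every $\idp$-local $Z$; thus $L_\idp S$ is the natural distinguished object of $\mathcal{T}_\idp$, one has $[L_\idp S, L_\idp S]_* \cong (\pi_* S)_\idp$, and a morphism $f$ of $\idp$-local objects is annihilated by $[L_\idp S,-]_*$ if and only if it is annihilated by $\pi_*$.

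For the global generating hypothesis I would argue as follows. By Proposition \ref{ghchar}, the hypothesis on $\mathcal{T}$ is equivalent to $\mathcal{T} = \langle S\rangle^0$. Let $Z$ be an arbitrary $\idp$-local object and write it as a retract of a coproduct $\iwdgbig{\alpha}\Sigma^{n_\alpha}S$. Applying $L_\idp$, which commutes with coproducts, suspensions, and retracts, and using $Z \cong L_\idp Z$, exhibits $Z$ as a retract of $\iwdgbig{\alpha}\Sigma^{n_\alpha}L_\idp S$, an object of $\mathcal{T}_\idp$. Since $Z$ was arbitrary, $\mathcal{T}_\idp = \langle L_\idp S\rangle^0$, and Proposition \ref{ghchar} applied to $\mathcal{T}_\idp$ (with distinguished object $L_\idp S$) yields the global generating hypothesis for $\mathcal{T}_\idp$.

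For the strong generating hypothesis the extra ingredient is a description of $\thicksub{L_\idp S}$. Since $S$ is a compact generator, $\mathcal{T} = \loc{S}$ and its compact objects are precisely $\thicksub{S}$ (\cite{neeman}); in particular every object of $\thicksub{S}$ is compact, and dually every object of $\thicksub{L_\idp S}$ is compact in $\mathcal{T}_\idp$, being built from the compact object $L_\idp S$ by suspensions, cofibers, and retracts. Because $L_\idp$ preserves coproducts and its kernel is a localizing subcategory generated by compact objects --- which is clear from the construction in \cite{axiomatic} --- the localization theorem (\cite{neeman}) identifies the compact objects of $\mathcal{T}_\idp$ with the retracts of the objects $L_\idp X$, $X \in \thicksub{S}$; hence every object of $\thicksub{L_\idp S}$ is such a retract. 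Granting this, I would verify condition (3) of Proposition \ref{sghchar} for $\mathcal{T}_\idp$: if $W \in \thicksub{L_\idp S}$, write it as a retract of $L_\idp X$ with $X \in \thicksub{S}$; then $[L_\idp S, W]_*$ is a direct summand of $[L_\idp S, L_\idp X]_* \cong \pi_* L_\idp X \cong (\pi_* X)_\idp$. By Proposition \ref{sghchar}(3) applied to $\mathcal{T}$, $\pi_* X$ is a projective $\pi_* S$-module, so its localization $(\pi_* X)_\idp$ is a projective $(\pi_* S)_\idp$-module, and therefore so is its direct summand $[L_\idp S, W]_*$. Thus $\mathcal{T}_\idp$ satisfies the strong generating hypothesis. (If one prefers to avoid the module characterization, the same description of $\thicksub{L_\idp S}$ gives a direct argument: given $\map{f}{W}{Y}$ in $\mathcal{T}_\idp$ with $W$ a retract of $L_\idp X$, $X \in \thicksub{S}$, and $[L_\idp S, f]_* = 0$, transport $f$ along the retraction $L_\idp X \to W$ and along $\iota_X \colon X \to L_\idp X$ to a map $X \to Y$ annihilated by $\pi_*$; the strong generating hypothesis for $\mathcal{T}$ kills this map, and the injectivity half of \eqref{natiso} then forces $f = 0$.)

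The manipulations with $L_\idp$ are routine; the one substantive point is the identification of $\thicksub{L_\idp S}$ with the retracts of the objects $L_\idp X$, $X \in \thicksub{S}$ --- i.e. that passing to the prime localization creates no new compact objects beyond the images of the old ones. This is exactly where it matters that $L_\idp$ is a finite localization, and hence why the explicit construction from \cite{axiomatic}, rather than merely the existence of a localization functor with the stated effect on homotopy, is needed.
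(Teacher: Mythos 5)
Your proof is correct, and it is instructive to set it beside the paper's much terser argument. For the global generating hypothesis you take a genuinely different route: the paper runs a diagram chase using \eqref{natiso} (for a map $f\colon L_\idp X\to L_\idp Y$, naturality of $\iota$ shows $[L_\idp S,f]_*=0$ forces $\pi_*(f\circ\iota_X)=0$, whence $f\circ\iota_X=0$ by faithfulness in $\mathcal{T}$ and then $f=0$ by \eqref{natiso}), whereas you simply apply $L_\idp$ to the characterization $\mathcal{T}=\langle S\rangle^0$ of Proposition~\ref{ghchar} to conclude $\mathcal{T}_\idp=\langle L_\idp S\rangle^0$; both are short and correct, and yours has the advantage of needing nothing beyond the formal properties of $L_\idp$. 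The more substantive comparison concerns the strong generating hypothesis. The paper disposes of it in one sentence, asserting that ``$X$ is compact in $\mathcal{T}$ if and only if $L_\idp X$ is compact in $\mathcal{T}_\idp$'' and rerunning the same diagram chase; taken at face value the ``if'' direction of that biconditional fails (an $\idp$-local object can be compact in $\mathcal{T}_\idp$ without being compact in $\mathcal{T}$ --- e.g.\ $R_\idp$ over an infinite product of fields $R$ at a nonprincipal maximal ideal), and what the argument really needs is that every object of $\thicksub{L_\idp S}$ is a retract of $L_\idp X$ for some $X\in\thicksub{S}$. You correctly isolate this as the one nontrivial input and supply it: $\ker L_\idp$ is the localizing subcategory generated by the compact cofibers $S/t$ with $t\in\pi_*S\setminus\idp$, so $L_\idp$ is a finite localization and the localization theorem of \cite{neeman} gives exactly the needed description of $\thicksub{L_\idp S}$. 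Given that, your parenthetical ``direct argument'' is the paper's diagram chase made precise, and your main argument via Proposition~\ref{sghchar}(3) is an equally good alternative. In short, your proof and the paper's share the same skeleton for the strong case, but you justify the compactness claim that the paper only asserts --- and that claim is where the real content of the strong case lies.
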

\begin{proof}
Consider a map $\map{f}{L_\idp X}{L_\idp Y}$.  Because of the natural isomorphism (\ref{natiso}), $f = 0$ if and only if $\tilde{f} = f\circ \iota_X = 0$.  For any $\alpha \in \pi_* X$, we have a commutative diagram
\[\xymatrix{S \ar[r]^-\alpha \ar[d]_-{\iota_S} & X \ar[d]_-{\iota_X} \ar[rd]^-{\tilde{f}} \\
	L_\idp S \ar[r]_-{L_\idp \alpha} & L_\idp X \ar[r]_-f & L_\idp Y.}\]
Hence, if $[L_\idp S, f]_* = 0$, then $[S, \tilde{f}]_* = 0$.  We immediately see that if $\mathcal{T}$ satisfies the global generating hypothesis, then so does $\mathcal{T}_\idp$.  The same argument applies if $\mathcal{T}$ satisfies the strong generating hypothesis, because $X$ is compact in $\mathcal{T}$ if and only if $L_\idp X$ is compact in $\mathcal{T}_\idp$.
\end{proof}

We end this section with the proof of Corollary \ref{gghsghcor}.

\begin{proof}[Proof of Corollary \ref{gghsghcor}]
Assume $\mathcal{T}$ satisfies the strong generating hypothesis.  Since $\mathcal{T}$ is a monogenic stable homotopy category, the unit object $S$ is a compact generator.  Further, $R = \pi_* S$ is commutative.  By Propositions \ref{global2local} and \ref{sghchar}, $R_\idp$ is a $\Delta^1_f$-ring.  Since it is a graded commutative local ring, it must be $\Delta^1$-ring by Proposition \ref{localchar}.  By Proposition \ref{localcase}, $R$ is either a graded field $k$ or an exterior algebra $k[x]/(x^2)$ over a field with a unit in degree $3|x|+1$.  We turn now to the last statement of the corollary.

Certainly, the global generating hypothesis always implies the strong generating hypothesis.  Assume the strong generating hypothesis is true.  By Proposition \ref{sghchar}, $R$ is a $\Delta^1_f$-ring.  If $R$ is local or Noetherian, then it is also a $\Delta^1$-ring by Proposition \ref{localchar} or \ref{noethchar}.  We now wish to invoke Corollary \ref{gghcor}.  To do so, we must verify condition (2).  Arguing as in the proof of Corollary \ref{gghcor}, it suffices to assume that $R \cong k[x]/(x^2)$.  For this ring, condition (2) is implied by the fact that $\pi_* C$ must be free by the strong generating hypothesis.
\end{proof}

\section{Ring spectra}

In this brief section, we translate our results into the setting of ring spectra, where by `ring spectrum' we mean either a symmetric ring spectrum or an $S$-algebra.  Let $E$ be a ring spectrum, and let $\mathcal{D}(E)$ denote the derived category of $E$-modules.  As in \S 3, write $\langle E \rangle^0$ for the collection of retracts of coproducts of suspensions of $E$; call these objects {\em projective $E$-module spectra}.  Define $E$ to be {\em semisimple} if every $E$-module spectrum is projective (i.e., if $\mathcal{D}(E) = \langle E \rangle^0$), and call $E$ {\em von Neumann regular} if every compact $E$-module spectrum is projective (i.e., if $\thicksub{E} = \langle E \rangle^0_f$).  The relationship between such ring spectra and the generating hypothesis is given by

\begin{prop} Let $E$ be a ring spectrum.  $E$ is semisimple if and only if $\mathcal{D}(E)$ satisfies the global generating hypothesis.  $E$ is von Neumann regular if and only if $\mathcal{D}(E)$ satisfies the strong generating hypothesis. \label{ringspectra}\end{prop}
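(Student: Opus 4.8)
The plan is to read off both equivalences directly from Propositions \ref{ghchar} and \ref{sghchar}, once we record the relevant structural facts about $\mathcal{D}(E)$. The key observation is that $\mathcal{D}(E)$ is a cocomplete triangulated category in which the unit $E$ is a compact generator; this is standard for the derived category of modules over a symmetric ring spectrum or an $S$-algebra (so that $\mathcal{D}(E) = \loc{E}$, and $\pi_* E$ is the graded ring of homotopy groups of $E$). Thus the hypotheses of Proposition \ref{ghchar} (cocompleteness) and of Proposition \ref{sghchar} (cocompleteness together with compactness of the distinguished object) are satisfied with $S = E$, and all we have to do is match up the definitions of semisimple and von Neumann regular with the appropriate clauses of those propositions.

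For the first statement, note that by definition $E$ is semisimple precisely when $\mathcal{D}(E) = \langle E \rangle^0$, which is condition (2) of Proposition \ref{ghchar}. The equivalence $(1) \iff (2)$ of that proposition then says exactly that this holds if and only if $\mathcal{D}(E)$ satisfies the global generating hypothesis.

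For the second statement, the compact objects of $\mathcal{D}(E)$ are exactly the objects of $\thicksub{E}$, since $E$ is a compact generator; so by definition $E$ is von Neumann regular precisely when $\thicksub{E} = \langle E \rangle^0_f$, which is condition (2) of Proposition \ref{sghchar}. The equivalence $(1) \iff (2)$ of that proposition then identifies this condition with the assertion that $\mathcal{D}(E)$ satisfies the strong generating hypothesis.

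There is essentially no obstacle here beyond invoking the standard facts that $\mathcal{D}(E)$ is cocomplete and that $E$ is a compact generator (see, e.g., \cite{axiomatic}); one should also note that the notations $\langle E \rangle^0_f$ and $\langle E \rangle^f_0$ occurring in the statements above both denote the collection of retracts of finite coproducts of suspensions of $E$, so that the definition of von Neumann regular matches condition (2) of Proposition \ref{sghchar} on the nose.
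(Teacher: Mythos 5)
Your proof is correct and follows the same route as the paper's: observe that $\mathcal{D}(E)$ is a cocomplete triangulated category with compact generator $E$, and then read off both equivalences from Propositions \ref{ghchar} and \ref{sghchar} by matching the definitions of semisimple and von Neumann regular to clause (2) of each proposition. The extra care you take in unwinding the definitions (including the notational remark about $\langle E \rangle^0_f$ versus $\langle E \rangle^f_0$) is sound but not a different method.
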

\begin{proof} The category $\mathcal{D}(E)$ is a cocomplete triangulated category with compact generator $E$.  The characterization of semisimplicity follows from Proposition \ref{ghchar}; the characterization of von Neumann regularity follows from Proposition \ref{sghchar}.
\end{proof}

The following proposition shows that these definitions are consistent with the ones in ordinary ring theory.

\begin{prop}  Let $HR$ denote the Eilenberg-Mac Lane spectrum associated to the ring $R$.  $HR$ is semisimple if and only if $R$ is semisimple, and $HR$ is von Neumann regular if and only if $R$ is von Neumann regular. \end{prop}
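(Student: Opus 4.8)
The plan is to connect the homotopy-theoretic notions of semisimple and von Neumann regular ring spectra, as applied to $HR$, with the classical ring-theoretic notions via the homotopy groups $\pi_*(HR) = R$, concentrated in degree zero. The key point is that the category $\mathcal{D}(HR)$ of $HR$-module spectra is equivalent (as a triangulated category) to the derived category $\mathcal{D}(R)$ of the ordinary ring $R$; this is a standard result (for symmetric ring spectra or $S$-algebras, see e.g. the work of Schwede--Shipley). Under this equivalence, the compact generator $HR$ corresponds to $R$ viewed as a complex concentrated in degree zero, and $\pi_*$ corresponds to graded homology $H_*(-)$. So the whole question reduces to understanding when $\mathcal{D}(R)$ satisfies the global or strong generating hypothesis with respect to $S = R$.

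First I would treat the semisimple case. By Proposition \ref{ringspectra}, $HR$ is semisimple if and only if $\mathcal{D}(HR) = \langle HR \rangle^0$, equivalently (via the equivalence above) $\mathcal{D}(R) = \langle R \rangle^0$, the retracts of coproducts of shifts of $R$. These are exactly the complexes with projective homology and no differentials up to equivalence --- i.e. the objects quasi-isomorphic to a graded projective module. But $\mathcal{D}(R) = \langle R \rangle^0$ says every complex is of this form; restricting to complexes concentrated in a single degree, this forces every $R$-module to be projective, which is precisely the condition that $R$ is semisimple. Conversely, if $R$ is semisimple, then every complex of $R$-modules splits as a direct sum of shifts of its homology modules, each of which is projective, so $\mathcal{D}(R) = \langle R\rangle^0$. (Alternatively, one may invoke Remark \ref{brownrem2}'s observation that in $\mathcal{D}(R)$ every module is realizable, so the global generating hypothesis forces every module to be projective.)

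For the von Neumann regular case, I would use Proposition \ref{ringspectra} again: $HR$ is von Neumann regular iff $\mathcal{D}(HR)$ satisfies the strong generating hypothesis, iff (transporting along the equivalence) $\mathcal{D}(R)$ does. As noted in Remark \ref{brownrem2}, citing \cite{hlp}, the strong generating hypothesis holds in $\mathcal{D}(R)$ precisely when every $R$-module is flat, which is exactly the condition that $R$ is von Neumann regular. The main obstacle --- really the only substantive point --- is justifying the triangulated equivalence $\mathcal{D}(HR) \simeq \mathcal{D}(R)$ and checking that it matches up the compact generators and homotopy/homology functors; once that is in hand, both equivalences follow formally from the classical characterizations of semisimple and von Neumann regular rings together with the results already established in Section 3. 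I would state the equivalence $\mathcal{D}(HR)\simeq\mathcal{D}(R)$ with a citation and keep the rest brief.
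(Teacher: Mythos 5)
Your proposal is correct and follows the same route as the paper: reduce via the standard triangulated equivalence $\mathcal{D}(HR)\simeq\mathcal{D}(R)$ and then invoke the characterizations of the global and strong generating hypotheses in $\mathcal{D}(R)$ in terms of $R$ being semisimple or von Neumann regular. The paper simply cites \cite{keir} and \cite{hlp} for those two derived-category facts (and EKMM rather than Schwede--Shipley for the equivalence), whereas you sketch the semisimple direction directly; that is a cosmetic rather than a substantive difference.
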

\begin{proof}
It is well-known (see, for example, \cite[IV.2.4]{ekmm}) that the derived category of $HR$-module spectra is equivalent to the derived category of $R$-modules, $\dr$.  In \cite[\S 4]{keir} it is shown that $\dr$ satisfies the global generating hypothesis if and only if $R$ is semisimple, and in \cite[1.3]{hlp} it is shown that $\dr$ satisfies the strong generating hypothesis if and only if $R$ is von Neumann regular.
\end{proof}

We would like to have a full classification of the semisimple and von Neumann regular ring spectra; we now summarize the application of the foregoing work to this problem.

\begin{prop}  Let $E$ be a ring spectrum.  If $\pi_* E$ is commutative, then \begin{enumerate}
\item If $E$ is semisimple, then $\pi_*E \cong R_1 \times \cdots \times R_n$, where $R_i$ is either a graded field $k$ or an exterior algebra $k[x]/(x^2)$ over a graded field containing a unit in degree $3|x|+1$ (i.e., $\pi_* E$ is a graded commutative $\Delta^1$-ring).
\item If $E$ is von Neumann regular, then $(\pi_* E)_\idp$ is a graded commutative local $\Delta^1$-ring for every prime ideal $\idp$ of $\pi_* E$.
\end{enumerate}
If $E$ is commutative, then\begin{enumerate}
\item[(3)] $E$ is semisimple if and only if $\pi_* E$ is a graded commutative $\Delta^1$-ring and for every factor ring of $\pi_* E$ of the form $k[x]/(x^2)$, $x \cdot \pi_* C \neq 0$, where $C$ is the cofiber of $x\cdot E$.
\item[(4)] If $\pi_*E$ is local or Noetherian, then $E$ is semisimple if and only if $E$ is von Neumann regular.
\end{enumerate}
\label{spectra}
\end{prop}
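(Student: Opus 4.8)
The plan is to deduce all four statements from the results of Sections~2 and~3 by means of Proposition~\ref{ringspectra}, which identifies semisimplicity of $E$ with the global generating hypothesis for $\mathcal{D}(E)$ and von Neumann regularity of $E$ with the strong generating hypothesis for $\mathcal{D}(E)$. The first thing I would record are the structural features of $\mathcal{D}(E)$ that the cited statements require: for any ring spectrum $E$ the category $\mathcal{D}(E)$ is a cocomplete triangulated category with compact generator $E$, and the graded endomorphism ring $[E,E]_*$ of the unit is $\pi_* E$; when $E$ is commutative, $\mathcal{D}(E)$ is moreover a monogenic stable homotopy category with unit $E$; and when $\pi_* E$ is commutative the localization functors $L_\idp$ of Section~3 are available on $\mathcal{D}(E)$, with $\pi_*(L_\idp E) \cong (\pi_* E)_\idp$.

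For~(1): if $E$ is semisimple then $\mathcal{D}(E) = \langle E\rangle^0$, which is condition~(2) of Proposition~\ref{ghchar}, so the category $\mathcal{P}$ of projective $\pi_* E$-modules admits a triangulation with $\Sigma = (-)[1]$; thus $\pi_* E$ is a $\Delta^1$-ring, and since it is graded commutative Theorem~\ref{mainthm1} gives the product decomposition into graded fields and exterior algebras. For~(2): if $E$ is von Neumann regular then $\mathcal{D}(E)$ satisfies the strong generating hypothesis (Proposition~\ref{ringspectra}), hence so does $\mathcal{D}(E)_\idp$ for each prime $\idp$ of $\pi_* E$ by Proposition~\ref{global2local}; applying Proposition~\ref{sghchar} to $\mathcal{D}(E)_\idp$, whose compact generator $L_\idp E$ satisfies $\pi_*(L_\idp E) \cong (\pi_* E)_\idp$, shows $(\pi_* E)_\idp$ is a $\Delta^1_f$-ring, and being graded commutative and local it is a $\Delta^1$-ring by Proposition~\ref{localchar}.

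For~(3) and~(4) I would take $E$ commutative, so that $\mathcal{D}(E)$ is a monogenic stable homotopy category with unit $E$ and Proposition~\ref{ringspectra} applies. Statement~(3) is then Corollary~\ref{gghcor} applied with $\mathcal{T} = \mathcal{D}(E)$, $S = E$, $R = \pi_* E$: $E$ is semisimple iff $\mathcal{D}(E)$ satisfies the global generating hypothesis iff both conditions of that corollary hold, and unwinding them gives precisely the stated condition on $\pi_* E$ and on the cofibers of the maps $x\cdot E$. For~(4), semisimplicity and von Neumann regularity of $E$ correspond, via Proposition~\ref{ringspectra}, to the global and the strong generating hypotheses for $\mathcal{D}(E)$ respectively, and the final sentence of Corollary~\ref{gghsghcor} asserts that these two hypotheses are equivalent once $R = \pi_* E$ is local or Noetherian.

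I do not anticipate a genuine obstacle here, since the substantive work is carried out in Sections~2 and~3 and in Proposition~\ref{ringspectra}; the proof is essentially a dictionary entry. The only points deserving care are the hypothesis bookkeeping: verifying that $\mathcal{D}(E)$ is cocomplete and compactly generated by $E$ in all cases, that it is a monogenic stable homotopy category in~(3) and~(4) where $E$ is commutative, and that the localization functors of Section~3 are legitimately available in~(2) under the weaker assumption that only $\pi_* E$ is commutative. One should also note that ``$\pi_* E$ commutative'' is to be read in the graded-commutative sense, carrying the Koszul sign, so that Theorem~\ref{mainthm1} and Corollaries~\ref{gghcor} and~\ref{gghsghcor} transfer without change.
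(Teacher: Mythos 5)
Your proposal is correct and follows precisely the route the paper intends: the paper gives no explicit proof of this proposition (it is introduced as a summary of the foregoing work), and your unwinding via Proposition~\ref{ringspectra}, then Propositions~\ref{ghchar}, \ref{sghchar}, \ref{global2local}, \ref{localchar}, Theorem~\ref{mainthm1}, and Corollaries~\ref{gghcor} and~\ref{gghsghcor} is exactly the dictionary translation the authors have in mind. The bookkeeping remarks you flag at the end --- that $\mathcal{D}(E)$ is cocomplete and compactly generated by $E$, that it is a monogenic stable homotopy category when $E$ is commutative, that the localizations $L_\idp$ only require $\pi_* E$ commutative, and that ``commutative'' carries the Koszul sign --- are all the right points to check and are handled correctly.
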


According to \cite{ss}, every simplicial, cofibrantly generated, proper, stable model category with a compact generator $P$ is Quillen equivalent to the module category of a certain endomorphism ring spectrum $\Endo{P}$.  This is true in particular for the derived category of a differential graded algebra.  In light of the proof of Proposition \ref{dga}, we see that every graded commutative $\Delta^1$-ring arises as $\pi_* E$ for some (not necessarily commutative) ring spectrum $E$.  Because of Proposition \ref{spectra} (3), however, $\pi_* E$ being a graded commutative $\Delta^1$-ring is not sufficient to conclude that $E$ is semisimple.

\vspace{.5in}


\begin{thebibliography}{10}

\bibitem{sunil}
David~J. Benson, Sunil~K. Chebolu, J.~Daniel Christensen, and J\'{a}n
  Min\'{a}\v{c}.
\newblock The generating hypothesis for the stable module category of a
  $p$-group.
\newblock arXiv:math/0611403v3.

\bibitem{bockstedt}
Marcel B{\"o}kstedt and Amnon Neeman.
\newblock Homotopy limits in triangulated categories.
\newblock {\em Compositio Math.}, 86(2):209--234, 1993.

\bibitem{ekmm}
A.~D. Elmendorf, I.~Kriz, M.~A. Mandell, and J.~P. May.
\newblock {\em Rings, modules, and algebras in stable homotopy theory},
  volume~47 of {\em Mathematical Surveys and Monographs}.
\newblock American Mathematical Society, Providence, RI, 1997.
\newblock With an appendix by M. Cole.

\bibitem{faith}
Carl Faith.
\newblock {\em Rings and things and a fine array of twentieth century
  associative algebra}, volume~65 of {\em Mathematical Surveys and Monographs}.
\newblock American Mathematical Society, Providence, RI, 1999.

\bibitem{freyd}
Peter Freyd.
\newblock Stable homotopy.
\newblock In {\em Proc. Conf. Categorical Algebra (La Jolla, Calif., 1965)}, pages 121--172. Springer, New York, 1966.

\bibitem{heller}
Alex Heller.
\newblock Stable homotopy categories.
\newblock {\em Bull. Amer. Math. Soc.}, 74:28--63, 1968.

\bibitem{hlp}
Mark Hovey, Keir~H. Lockridge, and Gena Puninski.
\newblock The generating hypothesis in the derived category of a ring.
\newblock {\em Mathematische Zeitschrift}, 10.1007/s00209-007-0103-x.

\bibitem{axiomatic}
Mark Hovey, John~H. Palmieri, and Neil~P. Strickland.
\newblock Axiomatic stable homotopy theory.
\newblock {\em Mem. Amer. Math. Soc.}, 128(610):x+114, 1997.

\bibitem{lam}
T.~Y. Lam.
\newblock {\em Lectures on modules and rings}, volume 189 of {\em Graduate
  Texts in Mathematics}.
\newblock Springer-Verlag, New York, 1999.

\bibitem{keir}
Keir~H. Lockridge.
\newblock The generating hypothesis in the derived category of 
{$R$}-modules.
\newblock {\em J. Pure Appl. Algebra}, 208(2):485--495, 2007.

\bibitem{matsumura}
Hideyuki Matsumura.
\newblock {\em Commutative algebra}, volume~56 of
\newblock {\em Mathematics Lecture Note Series}.
\newblock Benjamin/Cummings Publishing Co., Inc., Reading, Mass., second edition, 1980.

\bibitem{muro}
Fernando Muro, Stefan Schwede, and Neil Strickland.
\newblock Triangulated categories without models.
\newblock arXiv:0704.1378v1.

\bibitem{neeman}
Amnon Neeman.
\newblock {\em Triangulated categories}, volume 148 of {\em Annals of
  Mathematics Studies}.
\newblock Princeton University Press, Princeton, NJ, 2001.

\bibitem{ss}  
Stefan Schwede and Brooke Shipley.
\newblock Stable model categories are categories of modules.
\newblock {\em Topology}, 42(1):103--153, 2003.

\bibitem{weibel}
Charles~A. Weibel.
\newblock {\em An introduction to homological algebra}, volume~38 of {\em
  Cambridge Studies in Advanced Mathematics}.
\newblock Cambridge University Press, Cambridge, 1994.

\end{thebibliography}

\end{document}